\documentclass[a4paper]{amsart}
\usepackage{amsmath,amssymb,amsthm}
\usepackage[british]{babel}
\usepackage{eucal}
\usepackage{tikz}
\usepackage{subcaption}
\usepackage{booktabs}
\newcommand{\ddiv}{\operatorname{div}}
\newcommand{\curl}{\operatorname{curl}}
\newcommand{\Div}{\operatorname{Div}}
\newcommand{\dDiv}{\operatorname{divDiv}}
\newcommand{\dom}{G}

\newtheorem{theorem}{Theorem}
\newtheorem{lemma}{Lemma}
\theoremstyle{definition}
\newtheorem{definition}[lemma]{Definition}
\newtheorem{corollary}[lemma]{Corollary}
\newtheorem{assumption}{Assumption}
\newtheorem{example}[lemma]{Example}
\theoremstyle{remark}
\newtheorem{remark}[lemma]{Remark}

\numberwithin{equation}{section}
\numberwithin{lemma}{section}

\date{}

\begin{document}
	%--------------------------------------------------------------------
	% Title, Author, etc.
	%--------------------------------------------------------------------
	\title[Transposed saddle-point problems]{The transposition method in saddle-point problems}
	\author{Dietmar Gallistl}
	\thanks{D.~Gallistl is supported by the European Research Council
		(StG \emph{DAFNE}, ID 891734).}
	\address{Institut f\"ur Mathematik, Universit\"at Jena, 07743 Jena, Germany}
	\email{dietmar.gallistl [at] uni-jena.de ; z.liu [at] uni-jena.de}
	\author{Zhen Liu}
	\thanks{Z.~Liu is supported by the Sino-German (CSC-DAAD) Postdoc Scholarship Program, ID 202406010516.}
% 	\address{Institut f\"ur Mathematik, Universit\"at Jena, 07743 Jena, Germany}
% 	\email{}
	%--------------------------------------------------------------------

\begin{abstract}
The transposition method known from the theory of second-order elliptic
partial differential equations is formulated for saddle-point problems
under certain structural assumptions known from the Brezzi splitting
plus mapping properties that reflect elliptic regularity in concrete
instances of differential operators with boundary conditions.
A framework is devised that grants approximation by standard,
unmodified Galerkin-type methods,
which may be nonconforming with respect to the more general data
allowed by the transposed operator.
That the data need not be regularized is an advantage particular
to the saddle-point approach.
Examples covered by the framework include the Laplacian, the Stokes operator,
linear elasticity, 
the bi-Laplacian, and the stationary Maxwell problem
with $L^2$ boundary data.
\end{abstract}
	
	\keywords{trace, saddle-point, transposition,
		inhomogeneous boundary-value problem}
	\subjclass{
		% % 31B30,  %Biharmonic and polyharmonic equations and functions
		% %35J05,  %Laplacian op, reduced wave equation (Helmholtz), Poisson equation
		% % 35J30,  %Higher-order elliptic equations [See also 31A30, 31B30]
		35J67,      %Boundary values of solutions to elliptic eq+ systems
		65N12,  %Stability and convergence of numerical methods
		%65N15,  %Error bounds
		%65N30%  %Finite elements, Rayleigh-Ritz and Galerkin methods, finite methods
	}

	\maketitle
	
	\section{Introduction}
	
	The transposition method is an elementary mechanism that converts
	\emph{regularity of solutions} to an elliptic partial differential
	equation (PDE) for certain classes
	of right-hand sides to the \emph{regularity of test functions}
	of the adjoint (transposed) operator.
	It was described in \cite{LionsMagenes1972} and
	\cite{Grisvard1985,GoulaouicGrisvard1970} for second-order
	elliptic operators with boundary data in $L^2$, which do not
	admit solutions with bounded Dirichlet energy.
	The numerical approximation by finite element methods (FEM)
	was studied by
	\cite{Berggren2004,ApelNicaisePfefferer2016,
		ApelNicaisePfefferer2017,Duran2020,Gao2025}. 
	The use of primal methods requires a fit of the boundary
	data to the trace space because boundary conditions are
	imposed in an essential fashion.
	This is not necessary when mixed dual methods are used because
	the Dirichlet datum appears on the right-hand side only.
	The compatibility issue is then shifted to the fact that
	the data may not be compatible with the mapping properties
	of the trace.
	As an effect, the discretized saddle-point problem may then be
	well-posed without further regularization: a decisive advantage
	of the mixed method. Its convergence as an approximation scheme,
	however, deserves justification. In particular, the mixed method,
	though not modified compared to its original form, may become
	nonconforming when applied to the transposed problem because
	the discrete functions may not form a subspace of the smoother
	test functions of the transposed problem.
	As an illustrating example, we mention Poisson's problem with
	$L^2$ boundary data, where the very weak solution is defined by
	testing with functions from the space $H^{1+s}(\Omega)$
	for some $s>1/2$. Their gradients then belong to $H^s$,
	but the usual discrete Raviart--Thomas vector fields 
	only belong to $H^\nu$ with $\nu<1/2$.
	In \cite{Gao2025}, the mixed Raviart--Thomas method was
	justified for the Laplacian boundary value problem with $L^2$
	data, where the convergence analysis used the regularization
	framework of \cite{ApelNicaisePfefferer2016}.
	In this work we give an alternative and to some extent more
	structure-oriented approach that circumvents any regularization
	in the method and its error analysis.
	
	The arguments used here are fairly generic in the sense that
	they are structural on the level of functional analysis of
	saddle-point problems,
	in spite of elliptic PDE being the principal application.
	We therefore propose a framework that departs from a stable
	saddle-point problem in the sense of Brezzi
	\cite{BoffiBrezziFortin2013} in Hilbert spaces
	$\Sigma\times U$,
	where $\Sigma$ is a subspace of a possibly larger
	Hilbert space $H$ with $\Sigma\subseteq H$.
	Given data $g\in \Sigma'$ and $f\in U'$, the saddle-point
	problem seeks $(\sigma,u)\in \Sigma\times U$ such that
	\begin{align*}
		\begin{aligned}
			&a(\sigma,\tau ) + b(\tau,u) &=&& \langle g ,\tau\rangle
			\\
			&b(\sigma,v)                 &=&& -\langle f,v\rangle 
		\end{aligned}
		\qquad\text{for all }  (\tau,v)\in\Sigma\times U.
	\end{align*}
	Assuming standard conditions on $a$, $b$ from the Brezzi
	splitting, this problem is well posed,
	and boundary data of $u$ in standard PDE applications
	will describe subclasses of $\Sigma'$.
	The well known idea of transposition \cite{LionsMagenes1972}
	is to use elliptic regularity and a transposed isomorphism,
	which results in an equation with more regular test functions,
	thereby giving a meaning to less regular data.
	Since the mixed formulation of elliptic problems shifts
	the boundary data from an essential condition to a datum
	on the right-hand side, we can make use of the fact that
	the discrete equations are well-defined whenever the discrete
	right-hand side is. In this work we aim at an error analysis
	of the resulting approximations under generic assumptions
	on mixed discretizations,
	which is Theorem~\ref{thm:error}.
	It states an error estimate for the approximation of
	$u$ in the norm of $U$, which in the PDE context this will often
	be the $L^2$ norm.
	We give a series of instances in particular PDEs to illustrate
	the implication to actual methods.
	
	The remaining parts of this article are organized as follows.
	Section~\ref{sec:abstract} presents, as our main result,
	an error estimate for the mixed Galerkin-type methods
	for the problem with irregular data $g$.
	Applications to specific model problems are then detailed
	in the subsequent sections, including the Laplacian (Section~\ref{sec:Laplacian}), the Stokes problem (Section~\ref{sec:Stokes}), linear elasticity (Section~\ref{sec:Elasticity}), the biharmonic equation (Section~\ref{sec:Biharmonic}), and the Maxwell equation (Section~\ref{sec:Maxwell}).
	
	\bigskip
	Throughout this work,
        an inequality $A\leq C B$ with a constant $C$ that is independent
	of discretization parameters is denoted by $A\lesssim B$.
	
	\section{Framework, discretization, main result}
	\label{sec:abstract}
	
	This section develops the transposition method for 
	saddle-point problems and analyzes a class of possibly nonconforming 
	mixed discretizations together with error estimates.
	
	\subsection{The continuous mixed problem}
	
	We start with a basic structural setting of saddle-point problems.
	\begin{assumption}[structure]
		\label{ass:saddle}
		(i) Let $\Sigma$, $U$ be Hilbert spaces 
		(the corresponding norms are denoted by
		$\|\cdot\|_\Sigma$ and $\|\cdot\|_U$)
		with bounded bilinear forms
		$$
		a:\Sigma\times\Sigma\to \mathbb R,
		\quad
		b:\Sigma\times U\to \mathbb R.
		$$
		Let $H$ be a Hilbert space with the norm $\| \cdot \|_H$ such
		that $\Sigma\hookrightarrow H$ continuously. 
		We assume that $a$ extends to a bounded, symmetric, positive semidefinite
		form on $H\times H$. Thus, 	
		$|a(\sigma,\tau)| \lesssim \| \sigma\|_H \|\tau\|_H$
		for all $\sigma,\tau\in H$.
		
		(ii) The Brezzi conditions hold: inf-sup of $b$ and $\Sigma$-coercivity
		of $a$ on the kernel of $b$,
		that is, there exist constants $\alpha_0>0$ and $\beta_0>0$
		such that
			$$
			\inf_{0\ne \tau\in Z}
			 \|\tau\|_{\Sigma}^{-2}a(\tau,\tau)\ge\alpha_0 
			\quad\text{and}\quad
			\inf_{0\ne v\in U}\sup_{0\ne\tau\in\Sigma}
                       ( \|\tau\|_{\Sigma}\|v\|_U)^{-1}
                       b(\tau,v)\ge\beta_0,
			$$
			where $Z:= \{\tau \in\Sigma: b(\tau,v)=0 \text{ for all } v\in U\}$
			denotes the kernel of $b$.
		\qed
	\end{assumption}
	
	The coercivity of $a$ on the kernel of $b$, 
	together with the inf-sup condition for $b$
	are sufficient for the well-posedness of the mixed problem
	\cite{BoffiBrezziFortin2013}.
	
	We identify  $U'$ with $U$ through the Riesz isomorphism of $U$
	in this paper. Let  $\langle \cdot, \cdot \rangle$ denote the duality pairing of $\Sigma' \times \Sigma$. 
	For every $g \in\Sigma'$ and $f\in U'$, consider the mixed problem: find $(\sigma, u) \in \Sigma \times U$ such that
	\begin{equation}
		\label{eq:classical-saddle}
		\left\{ 
		\begin{aligned}
			a(\sigma,\tau)+b(\tau,u)&=\langle g,\tau\rangle &&\text{ for all } \tau\in\Sigma,
			\\
			b(\sigma,v)&= - (f,v)_U &&\text{ for all } v\in U.
		\end{aligned} \right.
	\end{equation}
	Assumption \ref{ass:saddle} and the saddle-point theory \cite{BoffiBrezziFortin2013} 
	imply that \eqref{eq:classical-saddle} has a unique solution and
	\begin{equation}\label{e:stability_standard}
		\|\sigma\|_\Sigma+\|u\|_U \lesssim
		\|g\|_{\Sigma'}+\|f\|_{U}.
	\end{equation}
	Accordingly, we define the continuous saddle-point operator 
	$$
	T:\Sigma\times U \to \Sigma'\times U
	$$
	by $T(\sigma,u)=(g,f)$ if and only if $(\sigma, u)$ satisfies \eqref{eq:classical-saddle}.
	By Assumption \ref{ass:saddle} and \eqref{e:stability_standard},
	$T$ is an isomorphism.

	\subsection{The transposition method for the mixed formulation}
	
	We are interested in irregular boundary data $g \not \in \Sigma'$.
	In this setting, the original mixed formulation is not defined
	and the transposition method for the mixed formulation is invoked.
	
	\begin{assumption}[elliptic regularity]
		\label{ass:regularity}
		Assume we are given Banach subspaces
		$\Sigma^r\subseteq\Sigma$ and $U^r \subseteq U$ endowed with 
		stronger norms $\| \cdot \|_{\Sigma^r}$ and $\| \cdot \|_{U^r}$, 
		respectively. For given $z \in U$, any solution to $T(\phi, w) = (0,z)$
		satisfies
		\begin{equation}\label{e:regularity}
		 \| \phi\|_{\Sigma^r} + \| w \|_{U^r} \lesssim \| z\|_{U}
		 .
		\end{equation}
		\qed
	\end{assumption}
	
	By Assumption \ref{ass:regularity}, we can define the transposition solution as follows.
	
	\begin{definition}
		Let Assumptions~\ref{ass:saddle}--\ref{ass:regularity} hold.
		Let $f\in U$ and let $g \in (\Sigma^r)'$.  An element $u\in U$ is 
		called the transposition solution associated with data $(f,g)$, if
		\begin{equation}
			\label{def:transposed}
			(u,z)_U
			=(f,w)_U - \langle g,\phi \rangle
			\qquad\text{ for all } z\in U,
		\end{equation}
		where $(\phi, w)$ is determined by $T(\phi, w) = (0,z)$.  
		For any $z\in U$,
		the extended boundary value of the transposition solution $u \in U$ is characterized by
		$$
		\langle\gamma u,\phi\rangle := (f,w)_U-(u,z)_U,
		$$
		for every pair $(\phi,w)$ satisfying $T(\phi,w)=(0,z)$.
	\end{definition}

	\begin{lemma}[transposition solution]
		\label{lem:continuous}
		Assume that Assumptions~\ref{ass:saddle} and~\ref{ass:regularity} hold.
		For every $f \in U$ and $g \in (\Sigma^r)'$, 
		there exists a unique transposition solution $u\in U$ to
		\eqref{def:transposed}.
		If $g \in \Sigma'$, then the transposition solution coincides with
		the solution of the classical mixed formulation $T(\sigma,u)=(g,f)$ for some $\sigma\in\Sigma$.
	\end{lemma}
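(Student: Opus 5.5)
The plan is to obtain existence and uniqueness from the Riesz representation theorem in $U$, and to obtain the consistency with the classical problem by a symmetry argument. Both the bounded solution map $z\mapsto(\phi,w)$ from Assumption~\ref{ass:regularity} and the symmetry of $a$ will be used.

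\textbf{Existence and uniqueness.} For $z\in U$, let $(\phi,w)=T^{-1}(0,z)$, which is well defined because $T$ is an isomorphism. The map $z\mapsto(\phi,w)$ is linear. By \eqref{e:regularity} it is bounded into $\Sigma^r\times U^r$, with $\|\phi\|_{\Sigma^r}+\|w\|_{U^r}\lesssim\|z\|_U$. Define
$$\ell(z):=(f,w)_U-\langle g,\phi\rangle .$$
This is linear in $z$, and
$$|\ell(z)|\le \|f\|_U\|w\|_U+\|g\|_{(\Sigma^r)'}\|\phi\|_{\Sigma^r}.$$
Since $U^r\hookrightarrow U$ continuously (the norm of $U^r$ is stronger), this gives $|\ell(z)|\lesssim(\|f\|_U+\|g\|_{(\Sigma^r)'})\|z\|_U$. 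Hence $\ell\in U'$. The Riesz representation theorem yields a unique $u\in U$ with $(u,z)_U=\ell(z)$ for all $z\in U$, which is exactly \eqref{def:transposed}. As a by-product we get the bound $\|u\|_U\lesssim \|f\|_U+\|g\|_{(\Sigma^r)'}$.

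\textbf{Consistency for $g\in\Sigma'$.} Let $(\sigma,u)=T^{-1}(g,f)$ be the classical solution, and fix $z$ with $(\phi,w)=T^{-1}(0,z)$. We test the two systems against each other.
\begin{itemize}
\item First equation of the classical problem with $\tau=\phi$: $a(\sigma,\phi)+b(\phi,u)=\langle g,\phi\rangle$.
\item Second equation of the classical problem with $v=w$: $b(\sigma,w)=-(f,w)_U$.
\item First equation of the dual problem with $\tau=\sigma$: $a(\phi,\sigma)+b(\sigma,w)=0$.
\item Second equation of the dual problem with $v=u$: $b(\phi,u)=-(z,u)_U$.
\end{itemize}
By the symmetry of $a$ from Assumption~\ref{ass:saddle}(i), the first and third relations give
$$\langle g,\phi\rangle+(z,u)_U=a(\sigma,\phi)=-b(\sigma,w)=(f,w)_U .$$
Therefore $(u,z)_U=(f,w)_U-\langle g,\phi\rangle$ for every $z$. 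So the classical $u$ satisfies \eqref{def:transposed}, and by uniqueness it is the transposition solution.

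\textbf{Sign conventions.} The only delicate point is that the signs match the convention $T(\phi,w)=(0,z)$. Under this convention, $b(\phi,v)=-(z,v)_U$ and the first dual equation has a zero right-hand side. Symmetry of $a$ is essential here, because the same operator $T$ serves as its own transpose. For $g\in\Sigma'$, we also need $\langle g,\phi\rangle$ to be the same pairing whether read in $\Sigma'$ or in $(\Sigma^r)'$. This holds because $\Sigma^r\subseteq\Sigma$ with a stronger norm, so restriction embeds $\Sigma'$ into $(\Sigma^r)'$.
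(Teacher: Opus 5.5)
Your proposal is correct and follows the same route as the paper. You represent the bounded functional $z\mapsto(f,w)_U-\langle g,\phi\rangle$ via Riesz, with the bound coming from \eqref{e:regularity} and $U^r\hookrightarrow U$, and then cross-test the classical and dual systems with $\tau=\phi$, $v=w$, $\tau=\sigma$, $v=u$, using the symmetry of $a$ and uniqueness.
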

	
	\begin{proof}
	    Given $z \in U$,
		Assumption~\ref{ass:regularity} implies that there exists a unique solution 
		$(\phi, w) \in \Sigma^r \times U^r$ to $T(\phi, w) = (0,z)$.
		Define the linear functional
		$\mathcal F(z) := (f,w)_U-\langle g,\phi\rangle$ for $z \in U$. 
		The regularity estimate \eqref{e:regularity}
		and $U^r\hookrightarrow U$ continuously show
		$$
		|\mathcal F(z)| \leq \|g\|_{(\Sigma^r)'} \|\phi\|_{\Sigma^r}
		+ \|f\|_U\|w\|_U \lesssim \left( \|g\|_{(\Sigma^r)'} + \|f\|_U
		\right)\|z\|_U.
		$$
		Thus $\mathcal F$ is continuous on $U$.
		By the Riesz isomorphism,
		there exists a unique $u\in U$ such that $(u,z)_U=\mathcal F(z)$ for all $z\in U$. Moreover,
		$$
		\|u\|_U = \|\mathcal F\|_{U'} \lesssim \|f\|_U+\|g\|_{(\Sigma^r)'}.
		$$
		This proves the existence, uniqueness and stability of the transposition solution.
		It remains to show that for $g\in \Sigma'$ this reproduces
		solutions to the original problem.
		For $g\in\Sigma'$, Assumption~\ref{ass:saddle} 
		shows that there exists a unique $(\sigma,u_c)\in\Sigma\times U$ satisfying $T(\sigma,u_c)=(g,f)$.
		Taking $\tau=\phi$ in the first equation of $T(\sigma,u_c)=(g,f)$ gives
		$\langle g,\phi\rangle = a(\sigma,\phi)+b(\phi,u_c)$. 
		Choosing $\tau=\sigma$ in the first equation of $T(\phi, w) = (0,z)$ and using the symmetry of $a(\cdot,\cdot)$ yields $a(\sigma,\phi)+b(\sigma,w)=0$.
		The second equation of $T(\sigma,u_c)=(g,f)$ then implies $(f,w)_U= a(\sigma,\phi)$.
		Consequently,
		$$
		(f,w)_U-\langle g,\phi\rangle=a(\sigma,\phi)-\bigl(a(\sigma,\phi)+b(\phi,u_c) \bigr) =-b(\phi,u_c).
		$$
		Using the second equation of $T(\phi, w) = (0,z)$ with $v=u_c$ gives $b(\phi,u_c)=-(z,u_c)_U$. Hence,
		$$
		(f,w)_U-\langle g,\phi\rangle = (z,u_c)_U = (u_c,z)_U \quad \text{ for all } z \in U.
		$$
		Therefore, $u_c$ satisfies the defining identity of the transposition solution. The uniqueness of the transposition solution completes the proof.
	\end{proof}
	
	\begin{remark}\label{rem:iso}
	 The transposition solution from Lemma~\ref{lem:continuous}
	 satisfies $\langle \gamma u,\phi\rangle = \langle g,\phi\rangle$
	 only for solutions to $T(\phi,w)=(0,z)$ with $z\in U$.
	 The relation $\gamma u = g$ as an identity in $(\Sigma^r)'$
	 would require the additional condition
	 that $T$ is an isomorphism between $\Sigma^r\times U^r$ and $\{0\}\times U$.
	\end{remark}

	\subsection{Mixed discretization}
	
	Let families of finite-dimensional spaces $\Sigma_h\subset H$ and $U_h\subset U$,
	where $h$ is a formal parameter that can be thought of as a discretization 
	length-scale in practice.
	Let $\| \cdot \|_{\Sigma,h}$ be a seminorm on $\Sigma_h+\Sigma$,
	which coincides with $\|\cdot\|_\Sigma$ for elements of $\Sigma$
	and which is a norm on $\Sigma_h$.
	We do not assume $\Sigma_h \subset \Sigma$ here,
	but assume that the embedding $\Sigma_h\hookrightarrow H$ is uniform
	in $h$.
	Assume that we are given a bilinear form $b_h$ on 
	$(\Sigma_h+\Sigma)\times U$ with
	$b_h(\tau,\cdot)=b(\tau,\cdot)$ for all $\tau\in\Sigma$ 
	and $|b_h(\tau_h, v)| \lesssim \|\tau_h \|_{\Sigma,h} \| v\|_{U}$ for 
	all $\tau_h \in \Sigma_h$.
		
	We recall that both $\Sigma^r$ and $\Sigma_h$ are subspaces of $H$,
	and so is their sum. 
	Abusing notation, given a linear functional on $\Sigma_h+\Sigma^r$,
	we write $g \in \Sigma_h' \cap (\Sigma^r)'$ if
	$\|g\|_{\Sigma_h'}$ and $\|g\|_{(\Sigma^r)'}$ are finite,
	the norms being the usual operator norms with respect to
	$\|\cdot\|_{\Sigma,h}$ and $\|\cdot\|_{\Sigma^r}$, respectively.
	The space of such $g$ is denoted by
	$\Sigma_h' \cap (\Sigma^r)'$.
		
	The discrete mixed problem reads:
	Given $g \in \Sigma_h' \cap (\Sigma^r)'$ and $f \in U$,
	find $(\sigma_h,u_h)\in\Sigma_h\times U_h$ such that
	\begin{equation}
		\label{eq:discrete}
		\left\{  
		\begin{aligned}
			a(\sigma_h,\tau_h)+b_h(\tau_h,u_h)&=\langle g,\tau_h\rangle
			&&\text{ for all } \tau_h\in\Sigma_h, \\
			b_h(\sigma_h,v_h)&=-(f,v_h)_U
			&&\text{ for all } v_h\in U_h.
		\end{aligned} \right.
	\end{equation}
	
	\begin{assumption}[discrete stability]
		\label{ass:discrete-stability}
		The discrete formulation \eqref{eq:discrete} is well-posed:
		There exist constants $\alpha_1>0$ and $\beta_1>0$,
		independent of $h$, such that
		$$
		\inf_{0\ne \tau_h\in Z_h}
                \|\tau_h\|_{\Sigma,h}^{-2}
		a(\tau_h,\tau_h)\ge\alpha_1,
		\quad
		\inf_{0\ne v_h\in U_h}\sup_{0\ne\tau_h\in\Sigma_h}
		(\|\tau_h\|_{\Sigma,h}\|v_h\|_U)^{-1} b_h(\tau_h,v_h)\ge\beta_1
		$$
		with $Z_h:= \{\tau_h\in\Sigma_h:b_h(\tau_h,v_h)=0 \text{ for all } v_h\in U_h \}$.
	\end{assumption}
	Assumption~\ref{ass:discrete-stability} as a standard condition
        in  saddle-point theory
	\cite{BoffiBrezziFortin2013} implies that \eqref{eq:discrete} 
	has a unique solution and that
	\begin{equation}
		\label{eq:discrete-stability}
		\|\sigma_h\|_{\Sigma,h}+\|u_h\|_U \lesssim \|g\|_{\Sigma_h'}+\|f\|_{U}
         .
	\end{equation}
	In what follows, three further quantitative
        assumptions on the discretization
	spaces are given for the sake of proving error estimates of the
	discrete solution $(\sigma_h, u_h)$ as an approximation to the transposition solution $u$.
	
	\begin{assumption}[uniform approximation of $U^r$]
		\label{condition1}
		Assume there exists a linear continuous operator
		$P_h: U^r\to U_h$ and a function 
		$\rho_U=\rho_U(h)$ with
		$\rho_U\to0$ as $h\to0$ satisfying
		$$
		\|w-P_hw\|_U \lesssim\rho_U\|w\|_{U^r} \quad \text{ for all } w\in U^r.
		$$
	\end{assumption}
	
	\begin{assumption}[commuting approximation of $\Sigma^r$]
		\label{condition2}
		Assume there exists a linear and continuous operator
		$\Pi_h:\Sigma^r\to\Sigma_h$ with the following properties.
		Given a discrete $z_h \in U_h$,
		we write $\phi=\phi(z_h)$ for the first component of the solution
		$(\phi, w) \in \Sigma^r \times U^r$ 
		of $T(\phi, w) = (0, z_h)$.
		Assume that there are functions
		$\rho_\Sigma=\rho_\Sigma(h)$
		and $\rho_X=\rho_X(h)$ with
		$\rho_\Sigma,\rho_X\to0$ as $h\to 0$
		and a Banach space
		$X \subset \Sigma_h' \cap (\Sigma^r)'$ such that
		\begin{align}
			b_h(\Pi_h\phi,v_h)&= b(\phi, v_h)
			&&\text{ for all } z_h \in U_h,  v_h\in U_h, \tag{i} \\
			\|\phi-\Pi_h\phi\|_H &\lesssim \rho_\Sigma\|\phi\|_{\Sigma^r}
			&&\text{ for all }z_h \in U_h, \tag{ii}	\\
			|\langle g,\phi-\Pi_h\phi\rangle| &\lesssim \rho_X\|g\|_X\|\phi\|_{\Sigma^r}
			&&\text{ for all } z_h \in U_h,g\in X. \tag{iii}
		\end{align}
	\end{assumption}
	
	\begin{assumption}[nonconformity control]
		\label{condition3}
		Given $ \phi \in \Sigma^r$ and $w \in U^r$ satisfying
		$a(\phi, \tau) + b(\tau, w)=0$ for all $\tau \in \Sigma$,
		we define the nonconforming consistency residual by
		$$
		R_h(\tau_h;\phi,w)
		:=a(\tau_h,\phi)+b_h(\tau_h,w) \quad \text{ for all } \tau_h\in\Sigma_h
		$$
		and assume that there exists a function 
		$\rho_{\mathrm{nc}}=\rho_{\mathrm{nc}}(h)$ with
		$\rho_{\mathrm{nc}}\to0$ as $h\to0 $ such that 
		$$
		|R_h(\tau_h;\phi,w)|\lesssim\rho_{\mathrm{nc}}\|\tau_h\|_{\Sigma,h}\left(\|\phi\|_{\Sigma^r}+\|w\|_{U^r}\right)
		\quad\text{for all }\tau_h\in\Sigma_h
		$$
		and all such pairs $(\phi,w)$.
	\end{assumption}
	
	\subsection{Error estimate}
	
	As our main theorem we give an abstract error estimate 
        based on the continuous and discrete mixed formulations.
	
	\begin{theorem}[error estimate]
		\label{thm:error}
		Let Assumptions~\ref{ass:saddle}--\ref{condition3}
		be satisfied.
		Given $f \in U$ and $g \in X$,
		let $u \in U$ be the transposition solution defined in 
		\eqref{def:transposed} and $(\sigma_h, u_h)\in \Sigma_h \times U_h$ 
		be the solution of \eqref{eq:discrete}. 
		Then the following error estimate holds
		$$
		\begin{aligned}
			\|u-u_h\|_U & 
			\lesssim 
			\inf_{v_h\in U_h} \|u-v_h\|_U
			+ \rho_X\|g\|_X 
			+\bigl(\rho_\Sigma+\rho_U+\rho_{\mathrm{nc}}\bigr)\left( \|f\|_U+\|g\|_{\Sigma_h'} \right).
		\end{aligned}
		$$
	\end{theorem}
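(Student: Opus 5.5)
Our approach is a duality (Aubin--Nitsche-type) argument carried out entirely at the level of the transposed problem. We split $u-u_h = (u-P^U_h u) + (P^U_h u - u_h)$, where $P^U_h$ is the $U$-orthogonal projection onto $U_h$. The first term gives $\inf_{v_h}\|u-v_h\|_U$. For the discrete part we set $z_h := P^U_h u - u_h\in U_h$ and let $(\phi,w)\in\Sigma^r\times U^r$ solve $T(\phi,w)=(0,z_h)$. Assumption~\ref{ass:regularity} gives $\|\phi\|_{\Sigma^r}+\|w\|_{U^r}\lesssim\|z_h\|_U$. Then
\[
\|z_h\|_U^2 = (u,z_h)_U - (u_h,z_h)_U ,
\]
and the definition \eqref{def:transposed} yields $(u,z_h)_U=(f,w)_U-\langle g,\phi\rangle$.

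The key step is to express $(u_h,z_h)_U$ through the discrete equations. The second equation of $T(\phi,w)=(0,z_h)$ with $v=u_h$ gives
\[
(u_h,z_h)_U=-b(\phi,u_h)=-b_h(\Pi_h\phi,u_h),
\]
where the last equality uses Assumption~\ref{condition2}(i). Testing the first equation of \eqref{eq:discrete} with $\tau_h=\Pi_h\phi$ gives $-b_h(\Pi_h\phi,u_h)=a(\sigma_h,\Pi_h\phi)-\langle g,\Pi_h\phi\rangle$. Next we rewrite $(f,w)_U$. We write $w=P_hw+(w-P_hw)$ with $P_h$ from Assumption~\ref{condition1}. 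The second discrete equation gives $(f,P_hw)_U=-b_h(\sigma_h,P_hw)=-b_h(\sigma_h,w)+b_h(\sigma_h,w-P_hw)$. Assumption~\ref{condition3} with $\tau_h=\sigma_h$ gives $-b_h(\sigma_h,w)=a(\sigma_h,\phi)-R_h(\sigma_h;\phi,w)$. This applies because the first equation of $T(\phi,w)=(0,z_h)$ is exactly the hypothesis $a(\phi,\tau)+b(\tau,w)=0$ of that assumption. Collecting terms, we obtain
\[
\|z_h\|_U^2 = (f,w-P_hw)_U + b_h(\sigma_h,w-P_hw) - R_h(\sigma_h;\phi,w) + a(\sigma_h,\phi-\Pi_h\phi) - \langle g,\phi-\Pi_h\phi\rangle .
\]

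Each term is then bounded separately. The first two are at most $\rho_U(\|f\|_U+\|\sigma_h\|_{\Sigma,h})\|w\|_{U^r}$, using the boundedness of $b_h$ on $\Sigma_h\times U$. The residual is at most $\rho_{\mathrm{nc}}\|\sigma_h\|_{\Sigma,h}(\|\phi\|_{\Sigma^r}+\|w\|_{U^r})$. The $a$-term is bounded through the $H$-continuity of $a$ and Assumption~\ref{condition2}(ii) by $\|\sigma_h\|_H\,\rho_\Sigma\|\phi\|_{\Sigma^r}$. Here the uniform embedding $\Sigma_h\hookrightarrow H$ gives $\|\sigma_h\|_H\lesssim\|\sigma_h\|_{\Sigma,h}$. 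The $g$-term is at most $\rho_X\|g\|_X\|\phi\|_{\Sigma^r}$ by Assumption~\ref{condition2}(iii). Finally, the discrete stability \eqref{eq:discrete-stability} gives $\|\sigma_h\|_{\Sigma,h}\lesssim\|g\|_{\Sigma_h'}+\|f\|_U$. Dividing by $\|z_h\|_U$ via the regularity bound and applying the triangle inequality completes the estimate.

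The main obstacle is the bookkeeping in the middle step. The two uses of the discrete equations must recombine so that the pairings $\langle g,\Pi_h\phi\rangle$ and $\langle g,\phi\rangle$ appear only as their difference. This requires $g$ to be defined on both $\Sigma_h$ and $\Sigma^r$, which is why $X\subset\Sigma_h'\cap(\Sigma^r)'$. The signs also need care: the symmetry of $a$ is used to identify $a(\sigma_h,\phi)$ with $a(\phi,\sigma_h)$ in $R_h$. One should also check that the projection onto $U_h$ is harmless. It is, since the argument only uses $z_h\in U_h$ to invoke Assumption~\ref{condition2}.
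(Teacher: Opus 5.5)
Your proposal is correct and follows essentially the same duality argument as the paper. The paper also splits off the best-approximation term via the $U$-orthogonal projection and poses the dual problem with a discrete right-hand side; you use $T(\phi,w)=(0,z_h)$ where the paper uses $T(\phi,w)=-(0,e_h)$, which is a harmless sign change. It then derives the same error identity (Fortin property (i), both discrete equations tested with $\Pi_h\phi$ and $P_hw$, and the residual $R_h(\sigma_h;\phi,w)$) and closes with identical term-by-term bounds and the discrete stability estimate. One minor remark: symmetry of $a$ is not needed at the step you mention, because $R_h$ is already defined with $a(\tau_h,\phi)$.
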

	We remark that the discrete norm of $g$ itself on the right-hand side of the 
	error estimate may diverge as $h\to 0$.
	\begin{proof}[Proof of Theorem~\ref{thm:error}]
	    The $U$-orthogonal projection $Q_h: U \to U_h$ satisfies
		$$
		  \|u-Q_hu\|_U = \inf_{v_h\in U_h} \|u-v_h\|_U
		\quad\text{and}\quad
		  \|u-u_h\|_U^2	= \|u-Q_hu\|_U^2 +\|Q_h u - u_h\|_U^2.
		$$
		Thus, setting $e_h=Q_hu-u_h\in U_h$,
		it remains to estimate $\| e_h\|_U$.
		Let $(\phi, w) \in \Sigma^r \times U^r$ be the regular adjoint pair determined by $T(\phi,w)=-(0,e_h)$.  By \eqref{e:regularity} in Assumption~\ref{ass:regularity},
		the solution satisfies
		\begin{equation}
			\label{eq:dual-reg}
			\|\phi\|_{\Sigma^r}+\|w\|_{U^r} \lesssim\|e_h\|_U.
		\end{equation}
		Taking the second equation in $T(\phi,w)=-(0,e_h)$ with $e_h$ gives
		\begin{equation}
			\label{eq:error-step1}
			\|e_h\|_U^2= (e_h, Q_hu -u_h)_U =  (e_h, u-u_h)_U =(u,e_h)_U-b(\phi,u_h).
		\end{equation}
		The definition of the transposition solution in \eqref{def:transposed} yields
		$$
		    (u,e_h)_U=\langle g,\phi\rangle-(f,w)_U.
		$$ 
		The Fortin relation in Assumption~\ref{condition2}(i) and the first discrete
		equation in \eqref{eq:discrete} imply
		$$
		b(\phi,u_h)=b_h(\Pi_h\phi,u_h)=\langle g,\Pi_h\phi\rangle-a(\sigma_h,\Pi_h\phi).
		$$
		Substituting these two equalities into \eqref{eq:error-step1}
		gives
        \begin{equation}
			\label{eq:error-step4}
			\begin{aligned}
				\|e_h\|_U^2 & =
				\langle g,\phi-\Pi_h\phi\rangle
				+a(\sigma_h,\Pi_h\phi)-(f,w)_U.
			\end{aligned}
		\end{equation}
		Testing the second discrete equation in \eqref{eq:discrete} with $P_h w$ gives 
		$$b_h(\sigma_h,P_hw)+(f,P_hw)_U=0.$$
		Adding this and
		$0=R_h(\sigma_h; \phi, w) - a(\sigma_h,\phi)-b_h(\sigma_h, w)$
		to \eqref{eq:error-step4} yields
		$$
		\begin{aligned}
			\|e_h\|_U^2 &=	\langle g,\phi-\Pi_h\phi\rangle +a(\sigma_h,\Pi_h\phi-\phi) -(f,w-P_hw)_U \notag \\
			& \quad +R_h(\sigma_h; \phi, w) -b_h(\sigma_h,w-P_hw).
		\end{aligned}
		$$
		Using the extension of $a(\cdot, \cdot)$ to $H\times H$
		and $\Sigma,\Sigma_h \hookrightarrow H$, 
		we infer from Assumption~\ref{condition2}(ii)
		\begin{align*}
			|a(\sigma_h,\Pi_h\phi-\phi)| \lesssim  \|\sigma_h\|_{H} \| \Pi_h\phi-\phi\|_H
			\lesssim \rho_\Sigma \|\sigma_h\|_{H} \|\phi\|_{\Sigma^r} \lesssim \rho_\Sigma \| \sigma_h \|_{\Sigma,h} \|\phi\|_{\Sigma^r} .
		\end{align*}
		Continuity of $b_h$ and Assumption \ref{condition1} show
		$$
		|b_h(\sigma_h,w-P_hw)| \le \|\sigma_h\|_{\Sigma,h} \|w-P_hw\|_U
		\lesssim \rho_U \|\sigma_h\|_{\Sigma,h} \|w\|_{U^r}.
		$$
		This, combined with Assumptions~\ref{ass:regularity}
		and \ref{condition1}--\ref{condition3}, gives
		$$ \|e_h\|_U^2 \lesssim	\Bigl( \rho_U\|f\|_U +\rho_X\|g\|_X +( \rho_\Sigma +\rho_U+\rho_{\mathrm{nc}})\|\sigma_h\|_{\Sigma,h} \Bigr)\|e_h\|_U.$$
		The discrete stability \eqref{eq:discrete-stability} completes the proof.
	\end{proof}

	\begin{remark}
    Suppose that Assumption~\ref{condition2} holds more generally
    for all $z_h \in U$ rather than merely on the discrete subspace $U_h$. 
    Then, setting $e_h := u - u_h$ and choosing $(\phi, w)$ as the regular adjoint pair 
    satisfying $T(\phi, w) = -(0, e_h)$ directly in the proof of Theorem~\ref{thm:error},
    the remaining arguments 
    yield the following more explicit error estimate
    $$
   	\|u-u_h\|_U  \lesssim
   	 \rho_X\|g\|_X 
   	 + \bigl(\rho_\Sigma+\rho_U+\rho_{\mathrm{nc}}\bigr)\left( \|f\|_U+\|g\|_{\Sigma_h'} \right).
    $$
 	Except for the application to the Maxwell system,
 	this stronger assumption is satisfied for all model problems considered in this paper.
		If, in addition, $P_h$ satisfies
		$b_h(\tau_h, w - P_h w) =0 $,
		then we obtain the following error estimate 
		$$
		\begin{aligned}
			\|u-u_h\|_U \lesssim \rho_U\|f\|_U + \rho_X\|g\|_X +
			\bigl(\rho_\Sigma+\rho_{\mathrm{nc}}\bigr)
			\left(\|f\|_U+\|g\|_{\Sigma_h'} \right).
		\end{aligned}$$
	\end{remark}
	
	\begin{remark}[conforming reduction]
		\label{remark:conforming}
		Suppose $\Sigma_h \subset \Sigma$ and $b_h$, $b$ coincide.
		Then the discrete method \eqref{eq:discrete} is conforming
		and the error estimate in Theorem~\ref{thm:error} 
		holds with $\rho_{\mathrm{nc}} =0$.		
	\end{remark}
	
	\begin{remark}
		For the transposition solution $u$, a stress-like variable
		can be recovered $\sigma^{\rm tr}\in(\Sigma^r)'$ through
		$$
		\langle\sigma^{\rm tr},\tau\rangle :=\langle g,\tau\rangle-b(\tau,u) \qquad\text{ for all }\tau\in\Sigma^r.
		$$
		If we additionally assume 
		that the three properties in Assumption~\ref{condition2} hold
		for arbitrary $\tau\in\Sigma^r$, 
		then this definition, the first discrete equation
		in \eqref{eq:discrete},
		and the Fortin relation in Assumption~\ref{condition2}(i) give the identity
		$$
		\langle\sigma^{\rm tr},\tau\rangle-a(\sigma_h,\tau) =
		\langle g,\tau-\Pi_h\tau\rangle +a(\sigma_h,\Pi_h\tau-\tau) -b(\tau,u-u_h).
		$$
		We then see
		$$
		|\langle\sigma^{\rm tr},\tau\rangle-a(\sigma_h,\tau)| \lesssim
		\Bigl(\|u-u_h\|_U +\rho_X\|g\|_X +\rho_\Sigma\|\sigma_h\|_{\Sigma,h} \Bigr)\|\tau\|_{\Sigma^r}.
		$$
		Taking the supremum over nonzero $\tau\in\Sigma^r$ and substituting the error estimate of $\|u-u_h\|_U$ in Theorem \ref{thm:error} proves that the error
		that
		 $$
				\|\sigma^{\mathrm{tr}}-a(\sigma_h,\cdot)\|_{(\Sigma^r)'}
		$$
		is controlled by the upper bound from Theorem~\ref{thm:error}.
	\end{remark}

	\begin{remark}
	 The discretization parameter $h$ can be localized in concrete applications
	 where it often describes a local mesh size of a finite element
	 triangulation. In that case, $h$ is an $L^\infty$ function that gives rise
	 to well-defined weighted norms $\|\rho_U \cdot\|_U$ and so forth.
	 For the sake of a simple presentation, we refrain from this formalization
	 step and leave the usually simple generalization to particular 
	 FEM applications.
	\end{remark}

	\section{Application to the Laplacian}
	\label{sec:Laplacian}
	
	Let $\Omega \subseteq \mathbb{R}^m$ with $m=\{2,3\}$
	be an open, bounded, connected, polytopal Lipschitz domain.
	The Poisson equation with Dirichlet boundary condition reads
	\begin{equation}
		\label{eq:laplace}
		-\Delta u = f \quad \text{in }\Omega, 
		\qquad u= g \quad  \text{ on }\partial\Omega.
	\end{equation}
	We illustrate a discretization with low-order Raviart--Thomas FEM,
	but the error estimate applies to many other mixed schemes as well.

	\subsection{Notation}
	\label{sec:notation}
We briefly fix some standard notation that will apply throughout the 
remaining parts of this article.
Given a finite-dimensional real vector space $Y$,
let $H^\ell(\dom; Y)$ denote the standard Hilbertian 
Sobolev space of real order 
$\ell \ge 0$ on the open domain $\dom$,
equipped with the norm $\Vert{}\cdot\Vert{}_{H^\ell(\dom)}$.
In particular, $L^2(\dom; Y)$ denotes the space of square-integrable functions associated with the inner product $(\cdot, \cdot)_{L^2(\dom)}$. 
The subspace $H_0^\ell(\dom; Y)$ denotes the closure of $C_0^\infty(\dom; Y)$ in $H^\ell(\dom; Y)$, and $H^{-\ell}(\dom; Y)$ denotes the dual space of $H_0^\ell(\dom; Y)$.
If $Y = \mathbb{R}$, we simply write $H^\ell(\dom)$, $L^2(\dom)$, and so forth.
The space of real $m\times m$ matrices is denoted by $\mathbb{M}$,
symmetric matrices are denoted by
$\mathbb{S} := \{M\in\mathbb M: M=M^\top\}$,
and the $m\times m$ unit matrix is denoted by $I$.
	For $0<s<1$
and  an $m$-dimensional Lipschitz domain $G$, define 
  $
    H^s(\partial G;Y)
    :=\{g\in L^2(\partial G;Y): \|g\|_{H^s(\partial G)}<\infty\}
   $,	
	where
	$$ |g|_{H^s(\partial  G)}^2 := \int_{\partial G}\int_{\partial G}
	\frac{|g(x)-g(y)|_Y^2} {|x-y|^{m-1+2s}} \,ds_x\,ds_y,
	\, \|g\|_{H^s(\partial G)}^2 := \|g\|_{L^2(\partial G)}^2+|g|_{H^s(\partial G)}^2. $$
It is assumed that the discrete spaces are defined over
simplicial triangulation $\mathcal{T}_h$ of $\bar{\Omega}$
from a shape-regular family, that they are quasi-uniform,
and generic constants may depend on their shape-regularity.
We note that the results of this paper remain true on more general
meshes that are only locally quasi-uniform because the critical
estimates can be localized.
However, for the sake of a simple exposition, we impose the
quasi-uniformity condition here and mention that the proofs
will generalize with minor modifications.
For any element $K \in \mathcal{T}_h$ and degree $\ell \ge 0$, $P_\ell(K; Y)$ denotes the space of $Y$-valued polynomials of degree at most $\ell$, with $P_\ell(\mathcal{T}_h; Y)$ denoting the associated
piecewise polynomial space with respect to $\mathcal{T}_h$.
Again, $Y$ is omitted in the notation when $Y = \mathbb{R}$.
The mesh size is denoted by $h := \max_{K \in \mathcal{T}_h} h_K$, where $h_K := \operatorname{diam}(K)$. Let $\mathcal{F}_h$, $\mathcal{E}_h$, and $\mathcal{V}_h$ denote the sets of hyperfaces, edges, and vertices in $\mathcal{T}_h$, respectively. Superscripts $\mathrm{i}$ and $\partial$ distinguish interior and boundary entities (e.g., $\mathcal{V}_h = \mathcal{V}_h^{i} \cup \mathcal{V}_h^\partial$). In addition, the local sets of faces, edges, and vertices of an element $K \in \mathcal{T}_h$ are denoted by $\mathcal{F}(K)$, $\mathcal{E}(K)$, and $\mathcal{V}(K)$, respectively.  The $L^2$-projection onto $P_\ell(\mathcal{T}_h)$ is denoted by $Q_h^\ell$. Similarly, $Q_F^\ell$ and $Q_e^\ell$ represent the $L^2$-projections onto $P_\ell(F)$ and $P_\ell(e)$ for a face $F$ and an edge $e$, respectively, and $Q_\partial^\ell$ denotes 
 the piecewise boundary $L^2$-projection.
	
	\subsection{Mixed method}
	
	The mixed formulation of \eqref{eq:laplace} is based on spaces 
	$$
	\Sigma = H(\operatorname{div},\Omega; \mathbb{R}^m) := \{ \tau \in L^2(\Omega; \mathbb{R}^m): \ddiv \tau \in L^2(\Omega) \} \text{ and } U=L^2(\Omega),
	$$
	with norms $\| \cdot\|_{\Sigma} = \| \cdot \|_{L^2(\Omega)}
	+ \lVert \operatorname{div} \cdot \rVert_{L^2(\Omega)}$ and $\| \cdot \|_U = \| \cdot \|_{L^2(\Omega)}$.
	The bilinear forms are
	$$ 
	a(\sigma,\tau) =(\sigma,\tau)_{L^2(\Omega)},
	\quad b(\tau,v) =(\ddiv \tau, v)_{L^2(\Omega)} \,
	         \text{ for all }  \sigma, \tau \in \Sigma, v \in U.
	$$
	With $H = L^2(\Omega; \mathbb{R}^m)$ normed by
	$\| \cdot \|_H = \| \cdot \|_{L^2(\Omega)}$,
	$a(\cdot, \cdot)$ extends to a bounded, symmetric, positive semi-definite form on $H \times H$.
	Assumption \ref{ass:saddle} follows from classical theory \cite{BoffiBrezziFortin2013}.
	Let $s \in (1/2,1]$ be an admissible elliptic shift exponent for the homogeneous
	Poisson Dirichlet problem,
	i.e., a number such that $f\in U$ in \eqref{eq:laplace} implies
	$\|u\|_{H^{1+s}(\Omega)}\lesssim \|f\|_U$.
	In two and three space dimensions, the existence of $s$ is proven
	in \cite{Grisvard1985,Dauge1988}.
	Let  $U^r = H^{1+s}(\Omega) \cap H_0^1(\Omega)$ with the norm $\| \cdot \|_{U^r} = \| \cdot \|_{H^{1+s}(\Omega)}$  and
	$$
	\Sigma^r = \Sigma \cap H^{s}(\Omega; \mathbb{R}^m)
	 \text{ with the norm }
	\|\cdot\|_{\Sigma^r} =
	     \|\cdot\|_{H^{s}(\Omega)}
	      + \lVert\operatorname{div}\cdot\rVert_{L^2(\Omega)}.
	$$  
	With this choice of spaces, 
	given any $z \in L^2(\Omega)$,  the solution to $T(\phi, w) = (0,z)$ is unique
	and satisfies \eqref{e:regularity},
	see, e.g., \cite{Grisvard1985,Dauge1988}
	or the summary result in \cite[Lemma 3.4]{Gao2025}. This regularity result proves Assumption \ref{ass:regularity}.

	Given $K \in \mathcal{T}_h$, we define the usual Raviart--Thomas space
	as 
	$
	RT_0(K) := \{ \tau: K\to\mathbb R^m: \tau(x) = \alpha + \beta x
	\text{ for }\alpha\in\mathbb R^m,\beta\in\mathbb R\}
	$
	and the
	stable conforming lowest-order Raviart--Thomas pair
	$$
	\Sigma_h = \{\tau\in\Sigma: \, \tau|_K\in RT_0(K) \,  \text{ for all } K\in\mathcal T_h\}
	 \quad\text{and}\quad
	  U_h = P_0(\mathcal{T}_h)
	  .
	$$
	Assumption \ref{ass:discrete-stability} holds for this discrete pair \cite{BoffiBrezziFortin2013}.
	Let $X=H^{t}(\partial\Omega)$ with $0\le {t}<1/2$ and $g \in X$. 
	In the discrete formulation, we have 
	$\langle g, \tau_h \rangle = (g, \tau_h \cdot n)_{L^2(\partial \Omega)}$
	for all $\tau_h \in \Sigma_h$, where $n$ denotes the outer unit normal vector of the boundary $\partial \Omega$.
		For $t=0$,  the Cauchy-Schwarz inequality and the inverse trace  inequality give
		$$ |\langle g,\tau_h \rangle| \le
		\|g\|_{L^2(\partial\Omega)} \|\tau_h\cdot n\|_{L^2(\partial\Omega)}
		\lesssim
		h^{-1/2} \|g\|_{L^2(\partial\Omega)} \|\tau_h\|_{L^2(\Omega)}.
		$$
		For $0 < t < 1/2$, there exists a lifting $G\in H^{1/2+t}(\Omega)$ such that  $G|_{\partial\Omega}=g$ and  $\|G\|_{H^{1/2+t}(\Omega)}\lesssim
		\|g\|_{H^{t}(\partial\Omega)}$. A generalized integration by parts gives 
		$$
		\langle g,\tau_h\rangle
		 = (\operatorname{div} \tau_h,G)_{L^2(\Omega)} + \langle \nabla G,  \tau_h \rangle_\Omega,
		$$
		where the last term is
		the duality product between $H^{t-1/2}(\Omega)$ and $H^{1/2-t}(\Omega)= H_0^{1/2-t}(\Omega)$. The Cauchy-Schwarz inequality then implies
		$$
		|(\operatorname{div} \tau_h,G)_{L^2(\Omega)} | 
		\leq \lVert \operatorname{div} \tau_h\rVert_{L^2(\Omega)} \| G\|_{L^2(\Omega)} 
		\lesssim \| \tau_h \|_{\Sigma} \| g\|_{H^t(\partial \Omega)}. 
		$$
		The inverse estimate in fractional Sobolev spaces shows
		$$
		 \langle \tau_h, \nabla G \rangle_\Omega \leq \| \nabla G\|_{H^{t-1/2}(\Omega)} \| \tau_h\|_{H^{1/2-t}(\Omega)} \lesssim h^{t-1/2}\| G\|_{H^{t+1/2}(\Omega)} \| \tau_h\|_{L^2(\Omega)}
		 .
		$$
        Thus, 
        $
        \langle g,\tau_h\rangle_{\partial\Omega} 
        \lesssim (1 + h^{t-1/2}) \| g\|_{H^t(\partial \Omega)} \| \tau_h \|_{\Sigma}
        $. 
		This, combined with the case $t=0$, shows
		the a~priori stability bound
		\begin{equation}
		\label{ds-po}
		\| \sigma_h \|_{\Sigma, h } + \| u_h \|_U 
			\lesssim \| f\|_{L^2(\Omega)} + h^{t-1/2} \| g\|_X 
			\text{ for all }  0 \leq t < 1/2.
		\end{equation}

	Let $P_h: U\to U_h$ be the $L^2$ projection onto piecewise constants.
	The Poincar\'e inequality implies
	$$
	\|w-P_hw\|_{L^2(\Omega)}
	\lesssim  h \|w\|_{U^r} \quad \text{ for all } w \in U^r,
	$$
	which verifies Assumption~\ref{condition1} with  $\rho_U=h$. 
	Let $\Pi_h:\Sigma^r\to\Sigma_h$ be the commuting Raviart--Thomas
	interpolation \cite{BoffiBrezziFortin2013}.
	Since $\operatorname{div}\Pi_h \phi  =P_h\operatorname{div}\phi$ for all $\phi \in \Sigma^r$, Assumption~\ref{condition2}(i) holds. Furthermore,
	$$
	\|\phi-\Pi_h\phi \|_{L^2(\Omega)} \lesssim
	h^{s}\|\phi\|_{\Sigma^r} \quad \text{ for all } \phi \in \Sigma^r.
	$$
	This gives Assumption~\ref{condition2}(ii) with $\rho_\Sigma=h^{s}$.
	Since the
	Raviart--Thomas projection $\Pi_h$ is defined such that
	$(\Pi_h \phi)\cdot n = Q_\partial^0(\phi \cdot n)$ holds
	for all $\phi \in \Sigma^r$ on $\partial\Omega$, we infer
	$$
	\begin{aligned}
		|\langle g,(\phi-\Pi_h\phi)\cdot n\rangle|
		&\le \|g- Q_\partial^0 g\|_{L^2(\partial\Omega)}
		\|(1-Q_\partial^0)(\phi\cdot n)\|_{L^2(\partial\Omega)} \\
		&\lesssim h^{s+t-1/2} \|g\|_{H^t(\partial\Omega)}\|\phi\|_{H^s(\Omega)}.
	\end{aligned}
	$$
	Thus $\rho_X=h^{s+t-1/2}$. Since $\Sigma_h \subset \Sigma$, Remark \ref{remark:conforming} applies and $\rho_{\rm nc} = 0$.
	We collect these findings as the following result, implied by
        Theorem~\ref{thm:error}.
	\begin{corollary}
		\label{co:laplace}
        Let $\Omega\subset\mathbb R^m$ be a bounded, open
        Lipschitz polytope and assume there exists some
        $1/2 < s \leq 1$ such that the homogeneous Dirichlet problem
        for the Laplacian satisfies elliptic $H^{1+s}(\Omega)$ regularity.
		Let $f\in L^2(\Omega)$ and $g\in H^t(\partial\Omega)$
        with $0\leq t < 1/2$.
        Then, the transposition solution $u$ of the Laplacian
	    $-\Delta u= f$ in $\Omega$ with boundary data $g$
		on $\partial\Omega$
		and the lowest-order Raviart-Thomas solution
        $(\sigma_h,u_h)$ to \eqref{eq:discrete}
		satisfy the error bound
		$$
		\|u-u_h\|_{L^2(\Omega)} \lesssim h^{s}\|f\|_{L^2(\Omega)} 
        + h^{s+t-1/2}\|g\|_{H^{t}(\partial\Omega)}
        .
		$$
	\end{corollary}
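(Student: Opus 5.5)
The plan is to apply Theorem~\ref{thm:error} with the quantities already verified in this section: $\rho_U=h$, $\rho_\Sigma=h^s$, $\rho_X=h^{s+t-1/2}$, $\rho_{\rm nc}=0$ (Remark~\ref{remark:conforming}, since $\Sigma_h\subset\Sigma$ and $b_h=b$), and $X=H^t(\partial\Omega)$. First I will check that $g\in X$ indeed lies in $\Sigma_h'\cap(\Sigma^r)'$. The bound $\|g\|_{\Sigma_h'}\lesssim h^{t-1/2}\|g\|_X$ was shown above. The bound $\|g\|_{(\Sigma^r)'}\lesssim\|g\|_X$ follows from the normal trace estimate $\|\phi\cdot n\|_{L^2(\partial\Omega)}\lesssim\|\phi\|_{H^s(\Omega)}$ for $s>1/2$. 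With these, Theorem~\ref{thm:error} gives
$$
\|u-u_h\|_{L^2(\Omega)}\lesssim \inf_{v_h\in U_h}\|u-v_h\|_{L^2(\Omega)} + h^{s+t-1/2}\|g\|_X + (h^s+h)\bigl(\|f\|_{L^2(\Omega)}+h^{t-1/2}\|g\|_X\bigr).
$$
Since $h\lesssim h^s$, the last term is bounded by $h^s\|f\|_{L^2(\Omega)}+h^{s+t-1/2}\|g\|_X$, which is of the claimed form.

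The main obstacle is the best-approximation term $\inf_{v_h}\|u-v_h\|_{L^2(\Omega)}$. The transposition solution is only in $L^2$ a priori, so this term must be quantified separately. Rather than proving regularity of $u$, I would avoid the term by rerunning the proof as in the first remark after Theorem~\ref{thm:error}. The remark notes that for the Laplacian, Assumption~\ref{condition2} holds for all $z\in U$, not only $z_h\in U_h$. The reasons are these: the Fortin identity $\operatorname{div}\Pi_h\phi=P_h\operatorname{div}\phi$ holds for every $\phi\in\Sigma^r$; the $H^s$ interpolation estimate and the boundary estimate of $\rho_X$ hold for every $\phi\in\Sigma^r$; and the adjoint solution of $T(\phi,w)=-(0,e)$ lies in $\Sigma^r\times U^r$ for any $e\in L^2(\Omega)$ by Assumption~\ref{ass:regularity}.

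So I will set $e=u-u_h$ and take $(\phi,w)$ with $T(\phi,w)=-(0,e)$. Then $\|e\|^2=(u,e)-(u_h,e)=(u,e)-b(\phi,u_h)$. The transposition identity \eqref{def:transposed} applied with $z=-e$ handles $(u,e)$. The term $b(\phi,u_h)=b(\Pi_h\phi,u_h)$ is handled by the first discrete equation, exactly as in the proof of Theorem~\ref{thm:error}. Adding the second discrete equation tested with $P_hw$ and the conforming identity $a(\sigma_h,\phi)+b(\sigma_h,w)=0$ produces
$$
\|e\|^2=\langle g,\phi-\Pi_h\phi\rangle+a(\sigma_h,\Pi_h\phi-\phi)-(f,w-P_hw)-b(\sigma_h,w-P_hw).
$$
For the last term I use that $P_h$ is the $L^2$ projection onto $P_0(\mathcal T_h)$ and $\operatorname{div}\sigma_h\in P_0(\mathcal T_h)$, so $b(\sigma_h,w-P_hw)=0$. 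This is the refinement in the same remark, and it removes the $\rho_U\|\sigma_h\|$ contribution.

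Bounding the remaining terms with $\rho_X$, $\rho_\Sigma$ and $\rho_U$ then gives
$$
\|e\|^2\lesssim\bigl(h^{s+t-1/2}\|g\|_X+h^s\|\sigma_h\|_{L^2(\Omega)}+h\|f\|\bigr)\bigl(\|\phi\|_{\Sigma^r}+\|w\|_{U^r}\bigr).
$$
I then apply \eqref{eq:dual-reg}, which bounds $\|\phi\|_{\Sigma^r}+\|w\|_{U^r}$ by $\|e\|$, and the stability bound \eqref{ds-po}. This yields $\|e\|\lesssim h^s\|f\|+h^{s+t-1/2}\|g\|_X$, which is the assertion of Corollary~\ref{co:laplace}.
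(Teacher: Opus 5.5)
Your proposal is correct and is essentially the paper's argument. The paper reads the corollary off Theorem~\ref{thm:error} with $\rho_U=h$, $\rho_\Sigma=h^s$, $\rho_X=h^{s+t-1/2}$, $\rho_{\rm nc}=0$ and the stability bound \eqref{ds-po}, and drops the best-approximation term via the remark after that theorem, since Assumption~\ref{condition2} holds for all $z\in U$ for the Raviart--Thomas interpolant; your explicit rerun with $e=u-u_h$ (including the optional observation $b(\sigma_h,w-P_hw)=0$) just spells out what the paper leaves implicit there.
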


	\begin{remark}
		The $L^2$ error estimate of $u-u_h$
		in Corollary \ref{co:laplace} coincides with
		the result provided in \cite[Theorem 2.2]{Gao2025}
		and \cite[Corollary 5.1]{Gao2025}.
		The proof shown here can dispense with a regularization procedure.
	\end{remark}

	\begin{example}
		\label{ex:laplace}
    We introduce some notation that will be used in all numerical
    illustrations throughout this paper.
    We consider the planar domains
	$\Omega_S$ and $\Omega_L$ and their three-dimensional extrusions
	$\Omega_C$ and $\Omega_P$, defined by
	\begin{equation}
		\label{eq:computational-domains}
		\begin{aligned}
			\Omega_S &= (0,1)^2,
			&
			\Omega_L &= (-1/2,1/2)^2
			\setminus \bigl([0,1/2]\times[-1/2,0]\bigr),\\
			\Omega_C &= \Omega_S\times(0,1),
			&
			\Omega_P &= \Omega_L\times(0,1).
		\end{aligned}
	\end{equation}
	Thus, $\Omega_S$ is the unit square, $\Omega_L$ is an L-shaped
	domain, $\Omega_C$ is the unit cube, and $\Omega_P$ is an
	L-shaped prism.
	The distinguished corner is the origin in both planar domains;
	it is convex for $\Omega_S$ and reentrant for $\Omega_L$.
	The corresponding edge in the three-dimensional domains is
	$\{(0,0,z):0<z<1\}$.
	The opening angles at these corners, or along these edges, are
	$\omega_S=\omega_C=\pi/2$ and $\omega_L=\omega_P=3\pi/2$.
	We use polar coordinates $(r,\theta)$ in the $(x,y)$-plane,
	centered at the origin.
	The angular ranges are $0<\theta<\omega_S$ on $\Omega_S$
	and $0<\theta< \omega_{L}$ on $\Omega_L$.
	The same convention applies to the horizontal cross sections
	of $\Omega_C$ and $\Omega_P$, respectively.
	The initial meshes are shown in Figure~\ref{fig:initial},
	and subsequent meshes are obtained by uniform refinement.
	
			 \begin{figure}[tbp]
		\centering
		%------------------------------------------------
		% (c) 2D: Unit square
		%------------------------------------------------
		\begin{subfigure}{0.23\textwidth}
			\centering
			\begin{tikzpicture}[scale=1.5, line cap=round, line join=round]
				\draw (0,0) rectangle (1,1);
				\draw (0.5,0) -- (0.5,1);
				\draw (0,0.5) -- (1,0.5);
				\draw (0,0) -- (0.5,0.5);
				\draw (0.5,0) -- (1,0.5);
				\draw (0,0.5) -- (0.5,1);
				\draw (0.5,0.5) -- (1,1);
			\end{tikzpicture}
			\caption{Unit square.}
			\label{fig:unit-square-mesh}
		\end{subfigure}\hfill
		%------------------------------------------------
		% (d) 2D: L-shaped domain
		%------------------------------------------------
		\begin{subfigure}{0.25\textwidth}
			\centering
			\begin{tikzpicture}[scale=0.75, line cap=round, line join=round]
				\draw (-1,-1) -- (0,-1) -- (0,0) -- (1,0)
				-- (1,1) -- (-1,1) -- cycle;
				\draw (-1,0) -- (0,0);
				\draw (0,0) -- (0,1);
				\draw (-1,-1) -- (0,0);
				\draw (-1,0) -- (0,1);
				\draw (0,0) -- (1,1);
			\end{tikzpicture}
			\caption{L-shaped domain.}
			\label{fig:l-domain-mesh}
		\end{subfigure} \hfill
		%------------------------------------------------
		% (a) 3D: Unit cube
		%------------------------------------------------
		\begin{subfigure}{0.23\textwidth}
			\centering
			\begin{tikzpicture}[
				x={(0.65cm,0cm)},
				y={(0.28cm,0.17cm)},
				z={(0cm,0.65cm)},
				scale=0.9,
				line cap=round,
				line join=round
				]
				\tikzset{
					mesh/.style={line width=0.25pt},
					bd/.style={very thick}
				}
				
				% Surface mesh on the visible faces.
				\foreach \a in {0,1,2} {
					\draw[mesh] (\a,0,0) -- (\a,0,2);
					\draw[mesh] (0,0,\a) -- (2,0,\a);
					
					\draw[mesh] (2,\a,0) -- (2,\a,2);
					\draw[mesh] (2,0,\a) -- (2,2,\a);
					
					\draw[mesh] (\a,0,2) -- (\a,2,2);
					\draw[mesh] (0,\a,2) -- (2,\a,2);
				}
				
				% Triangulation on the front face.
				\draw[mesh] (0,0,0) -- (1,0,1);
				\draw[mesh] (1,0,0) -- (2,0,1);
				\draw[mesh] (0,0,1) -- (1,0,2);
				\draw[mesh] (1,0,1) -- (2,0,2);
				
				% Triangulation on the right face.
				\draw[mesh] (2,0,0) -- (2,1,1);
				\draw[mesh] (2,1,0) -- (2,2,1);
				\draw[mesh] (2,0,1) -- (2,1,2);
				\draw[mesh] (2,1,1) -- (2,2,2);
				
				% Triangulation on the top face.
				\draw[mesh] (0,0,2) -- (1,1,2);
				\draw[mesh] (1,0,2) -- (2,1,2);
				\draw[mesh] (0,1,2) -- (1,2,2);
				\draw[mesh] (1,1,2) -- (2,2,2);
			\end{tikzpicture}
			\caption{Unit cube.}
			\label{fig:maxwell-cube-mesh}
		\end{subfigure}\hfill
		%------------------------------------------------
		% (b) 3D: L-shaped prism
		%------------------------------------------------
		\begin{subfigure}{0.23\textwidth}
			\centering
			\begin{tikzpicture}[
				x={(0.65cm,0cm)},
				y={(0.28cm,0.17cm)},
				z={(0cm,0.65cm)},
				scale=0.9,
				line cap=round,
				line join=round
				]
				\tikzset{
					mesh/.style={line width=0.25pt},
					bd/.style={very thick}
				}
				
				% Front face y=0, 0 <= x <= 1.
				\foreach \a in {0,1} {
					\draw[mesh] (\a,0,0) -- (\a,0,2);
				}
				\foreach \a in {0,1,2} {
					\draw[mesh] (0,0,\a) -- (1,0,\a);
				}
				\draw[mesh] (0,0,0) -- (1,0,1);
				\draw[mesh] (0,0,1) -- (1,0,2);
				
				% Outer right face x=2, 1 <= y <= 2.
				\foreach \a in {1,2} {
					\draw[mesh] (2,\a,0) -- (2,\a,2);
				}
				\foreach \a in {0,1,2} {
					\draw[mesh] (2,1,\a) -- (2,2,\a);
				}
				\draw[mesh] (2,1,0) -- (2,2,1);
				\draw[mesh] (2,1,1) -- (2,2,2);
				
				% Reentrant face y=1, 1 <= x <= 2.
				\foreach \a in {1,2} {
					\draw[mesh] (\a,1,0) -- (\a,1,2);
				}
				\foreach \a in {0,1,2} {
					\draw[mesh] (1,1,\a) -- (2,1,\a);
				}
				\draw[mesh] (1,1,0) -- (2,1,1);
				\draw[mesh] (1,1,1) -- (2,1,2);
				
				% Reentrant face x=1, 0 <= y <= 1.
				\foreach \a in {0,1} {
					\draw[mesh] (1,\a,0) -- (1,\a,2);
				}
				\foreach \a in {0,1,2} {
					\draw[mesh] (1,0,\a) -- (1,1,\a);
				}
				\draw[mesh] (1,0,0) -- (1,1,1);
				\draw[mesh] (1,0,1) -- (1,1,2);
				
				% Top L-shaped face z=2.
				\draw[mesh]
				(0,0,2) -- (1,0,2) -- (1,1,2)
				-- (2,1,2) -- (2,2,2) -- (0,2,2) -- cycle;
				
				\draw[mesh] (1,0,2) -- (1,2,2);
				\draw[mesh] (0,1,2) -- (2,1,2);
				
				% Surface triangulation.
				\draw[mesh] (0,0,2) -- (1,1,2);
				\draw[mesh] (0,1,2) -- (1,2,2);
				\draw[mesh] (1,1,2) -- (2,2,2);
			\end{tikzpicture}
			\caption{L-shaped prism.}
			\label{fig:l-prism-mesh}
		\end{subfigure}
		\caption{Initial meshes in two and three dimensions.}
		\label{fig:initial}
	\end{figure}
	For the actual example in the context of the Poisson equation,
	we consider the three-dimensional
	domains $\Omega_C$ and $\Omega_P$.
		We set $\alpha=-1/3$ and
		define the singular function $\psi$ and the exact solution by
		\begin{equation}
			\label{eq:poisson-3d-rough-exact}
			\psi(r,\theta)=r^\alpha\sin(\alpha\theta), \qquad
			u(x,y,z)=\psi(r,\theta)z(1-z).
		\end{equation}
		The right-hand side is
		$f(x,y,z)=2\psi(r,\theta)$. The induced boundary datum satisfies
		$$
		g=u|_{\partial\Omega} \in H^{1/6- \nu}(\partial\Omega) 
		\text{ for any } \nu >0.
		$$
		
		Given that the shift regularity indices for these domains are $s_C = 1$ and $s_P = 2/3$
		\cite{Dauge1988, Grisvard1985},
		the predicted $L^2$ convergence rates from Corollary~\ref{co:laplace} (with $t = 1/6 - \nu$)
		are (almost) $2/3$ and $1/3$, respectively. Table~\ref{tab:laplace} displays the $L^2$ errors
		along with experimental convergence orders,
		which conform to these predictions.
		The evaluation of integrals involving singular functions
		is done via composite Gaussian quadrature with
		local subdivision near the singular edge.
		\qed
		
		\begin{table}[tbp]
			\centering\small
			\begin{tabular}{ccccc}
				\toprule
				& \multicolumn{2}{c}{Unit cube}
				& \multicolumn{2}{c}{L-shaped prism}\\
				\cmidrule(lr){2-3}\cmidrule(lr){4-5}
				$h_\ell/\sqrt{3}$
				& $\|u-u_h\|_{L^2(\Omega)}$ & order
				& $\|u-u_h\|_{L^2(\Omega)}$ & order\\
				\midrule
				$2^{-1}$ & $2.2668\mathrm{e}{-2}$ & --
				& $6.0814\mathrm{e}{-2}$ & --\\
				$2^{-2}$ & $1.3396\mathrm{e}{-2}$ & $0.7589$
				& $3.7741\mathrm{e}{-2}$ & $0.6883$\\
				$2^{-3}$ & $7.8087\mathrm{e}{-3}$ & $0.7786$
				& $2.4190\mathrm{e}{-2}$ & $0.6417$\\
				$2^{-4}$ & $4.6202\mathrm{e}{-3}$ & $0.7571$
				& $1.6439\mathrm{e}{-2}$ & $0.5573$\\
				$2^{-5}$ & $2.7804\mathrm{e}{-3}$ & $0.7327$
				& $1.1767\mathrm{e}{-2}$ & $0.4824$\\
				$2^{-6}$ & $1.6969\mathrm{e}{-3}$ & $0.7124$
				& $8.7462\mathrm{e}{-3}$ & $0.4280$\\
				$2^{-7}$ & $1.0466\mathrm{e}{-3}$ & $0.6973$
				& $6.6644\mathrm{e}{-3}$ & $0.3922$\\
				$2^{-8}$ & $6.5020\mathrm{e}{-4}$ & $0.6867$
				& $5.1577\mathrm{e}{-3}$ & $0.3697$\\
				\bottomrule
			\end{tabular}
			\caption{$L^2$ errors and observed convergence orders for the
				$RT_0$-$P_0$ mixed approximation of the three-dimensional Poisson
				problem with boundary data
				$g \in H^{1/6-\nu}(\partial\Omega)$.}
			\label{tab:laplace}
		\end{table}
		
	\end{example}

	\subsection{The Crouzeix--Raviart method}
	\label{subsec:CR}
	The nonconforming $P_1$ method due to Crou\-zeix--Raviart
	offers an alternative
	scheme that allows for a local $L^2$ approximation of irregular
	boundary data. It is well known
	\cite{ArnoldBrezzi1986,Marini1985} that this discretization
	is in one-to-one correspondence with the mixed Raviart--Thomas
	method, and we can thereby derive an error bound for that scheme
	as a corollary of the previous result.
	We mention that, alternatively, a direct proof without the
	detour over the mixed method can be worked out as well.
	
		For $g\in X$, let 
	$V_{h,g}^{\rm nc}$ denote the Crouzeix--Raviart finite element
	space with boundary values prescribed by $g$.
	This affine space is spanned by the piecewise affine functions
	that satisfy $\int_F [v_h]ds =0$ for each interior face
	$F\in\mathcal F_h^i$ (here brackets denote the interface
	jump) and $\int_F (g-v_h)ds=0$ for each boundary
	face $F\in\mathcal F_h^\partial$.
	Then, $V_{h,0}^{\rm nc}$ denotes the corresponding homogeneous space. 
	The nonconforming method seeks
	$u_h^{\rm nc}\in V_{h,g}^{\rm nc}$ such that
	\begin{equation}\label{eq:cr-laplace}
		(\nabla_hu_h^{\rm nc},\nabla_hv_h)_{L^2(\Omega)}
		= (f,v_h)_{L^2(\Omega)} \qquad \text{for all }v_h\in V_{h,0}^{\rm nc},
	\end{equation}
	where $\nabla_h$ denotes the piecewise gradient with respect to $\mathcal{T}_h$.
    We introduce the 
	auxiliary discrete solution $\tilde u_h^{\rm nc}\in V_{h,g}^{\rm nc}$ satisfying
	\eqref{eq:cr-laplace} with $f$ replaced by $f_h = Q_h^0f$.

	Let $(\sigma_h,u_h)\in\Sigma_h\times U_h$ denote the
	$RT_0$--$P_0$ solution introduced in the previous subsection.
	The classical equivalence due to \cite{Marini1985} 
	states
	   \begin{equation}
			\label{eq:cr-rt}
			\sigma_h = \nabla_h\tilde u_h^{\rm nc} - m^{-1}f_hr_h,
			\qquad u_h = Q_h^0\tilde u_h^{\rm nc} + m^{-2}f_hQ_h^0(|r_h|^2).
		\end{equation}
	 where, for every $K\in\mathcal T_h$, 
	 $r_h|_K(x):=x-x_K$ with the barycenter $x_K$ of $K$.
	
	\begin{corollary}
		\label{cor:cr-poisson-rough}
		Let the conditions of Corollary~\ref{co:laplace} hold.
		Then the transposition solution $u$ of the Laplacian
		$-\Delta u= f$ in $\Omega$ with boundary data $g$
		on $\partial\Omega$ and the
		Crouzeix--Raviart solution to \eqref{eq:cr-laplace} satisfy the error bound
		$$
		\|u-u_h^{\rm nc}\|_{L^2(\Omega)} \lesssim h^s\|f\|_{L^2(\Omega)} + h^{s+t-1/2}\|g\|_{H^t(\partial\Omega)}.
		$$
	\end{corollary}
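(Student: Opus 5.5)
The plan is to split
$$
\|u-u_h^{\rm nc}\|_{L^2(\Omega)}\le \|u-u_h\|_{L^2(\Omega)}+\|u_h-\tilde u_h^{\rm nc}\|_{L^2(\Omega)}+\|\tilde u_h^{\rm nc}-u_h^{\rm nc}\|_{L^2(\Omega)},
$$
where $u_h$ is the $RT_0$--$P_0$ solution. The first term is bounded by Corollary~\ref{co:laplace}. The other two terms are purely discrete and should be of higher or equal order.

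\emph{Second term.} I would use the Marini identity \eqref{eq:cr-rt}. It gives
$$
u_h-\tilde u_h^{\rm nc}=(Q_h^0-1)\tilde u_h^{\rm nc}+m^{-2}f_hQ_h^0(|r_h|^2).
$$
The last contribution is bounded by $h^2\|f\|_{L^2(\Omega)}$, since $|r_h|\le h$ pointwise. For the first contribution, the elementwise Poincar\'e inequality gives
$$
\|(1-Q_h^0)\tilde u_h^{\rm nc}\|_{L^2(\Omega)}\lesssim h\|\nabla_h\tilde u_h^{\rm nc}\|_{L^2(\Omega)}.
$$
Again by \eqref{eq:cr-rt}, $\nabla_h\tilde u_h^{\rm nc}=\sigma_h+m^{-1}f_hr_h$. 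The discrete stability \eqref{ds-po} then yields
$$
\|\nabla_h\tilde u_h^{\rm nc}\|_{L^2(\Omega)}\lesssim \|f\|_{L^2(\Omega)}+h^{t-1/2}\|g\|_{H^t(\partial\Omega)}.
$$
Multiplied by $h$, this is at most $h\|f\|_{L^2(\Omega)}+h^{t+1/2}\|g\|_{H^t(\partial\Omega)}$, which is dominated by the claimed bound because $s\le1$.

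\emph{Third term.} The difference $\delta_h:=\tilde u_h^{\rm nc}-u_h^{\rm nc}$ lies in $V_{h,0}^{\rm nc}$, since both functions share the boundary moments of $g$. It solves
$$
(\nabla_h\delta_h,\nabla_h v_h)_{L^2(\Omega)}=(f_h-f,v_h)_{L^2(\Omega)}=\bigl(f_h-f,(1-Q_h^0)v_h\bigr)_{L^2(\Omega)}.
$$
The right-hand side is bounded by $h\|f\|_{L^2(\Omega)}\|\nabla_h v_h\|_{L^2(\Omega)}$. Testing with $v_h=\delta_h$ gives $\|\nabla_h\delta_h\|_{L^2(\Omega)}\lesssim h\|f\|_{L^2(\Omega)}$. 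The discrete Poincar\'e (Friedrichs) inequality on $V_{h,0}^{\rm nc}$ then gives $\|\delta_h\|_{L^2(\Omega)}\lesssim h\|f\|_{L^2(\Omega)}$. A duality argument would even give $h^{1+s}$, but this is unnecessary.

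The main point to check is the second term: the nonconforming function must be controlled by the RT stability bound \eqref{ds-po}, whose $g$-part blows up like $h^{t-1/2}$. The gain of one power of $h$ from the Poincar\'e estimate on $(1-Q_h^0)\tilde u_h^{\rm nc}$ must compensate for this. It does, since $t+1/2\ge s+t-1/2$ when $s\le1$. I would also verify that \eqref{eq:cr-rt} holds with inhomogeneous boundary moments given by $Q_\partial^0 g$. This is consistent because the RT right-hand side only sees $\tau_h\cdot n$, which is constant on each face, and that is exactly what the Crouzeix--Raviart face moments encode.
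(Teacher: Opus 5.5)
Your proposal is correct and follows essentially the same route as the paper. The paper uses the same three-term splitting via the $RT_0$--$P_0$ solution $u_h$ and the auxiliary $\tilde u_h^{\rm nc}$. The middle term is handled by the Marini identity \eqref{eq:cr-rt}, the elementwise Poincar\'e inequality and the stability bound \eqref{ds-po}, and the last term by testing the difference equation with $\tilde u_h^{\rm nc}-u_h^{\rm nc}$, inserting $Q_h^0$ and applying the discrete Poincar\'e inequality, which gives $h\|f\|_{L^2(\Omega)}$. The paper then concludes exactly as you do, using $s\le 1$ so that these discrete terms are dominated by the bound of Corollary~\ref{co:laplace}.
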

	
	\begin{proof}
		By the triangle inequality,
		\begin{equation}
			\|u-u_h^{\rm nc}\|_{L^2(\Omega)} \le \|u-u_h\|_{L^2(\Omega)} +
			\|u_h-\tilde u_h^{\rm nc}\|_{L^2(\Omega)} + \|\tilde u_h^{\rm nc}-u_h^{\rm nc}\|_{L^2(\Omega)}.
			\label{eq:cr-rt-triangle}
		\end{equation} 
	    A triangle inequality, \eqref{eq:cr-rt}, and the element-wise Poincar\'e inequality give
		$$
		\begin{aligned}
			\|u_h-\tilde u_h^{\rm nc}\|_{L^2(\Omega)} & \le \|u_h-  Q_h^0  \tilde u_h^{\rm nc}\|_{L^2(\Omega)} + \|  Q_h^0  \tilde u_h^{\rm nc} - \tilde u_h^{\rm nc}\|_{L^2(\Omega)} \\
			&\lesssim  h^2\|f_h\|_{L^2(\Omega)} +h\|\nabla_h\tilde u_h^{\rm nc}\|_{L^2(\Omega)}.
		\end{aligned}
		$$
		The combination of \eqref{eq:cr-rt} and the stability result 
		\eqref{ds-po} shows
		$$
		\|\nabla_h\tilde u_h^{\rm nc}\|_{L^2(\Omega)} \lesssim \|f_h\|_{L^2(\Omega)} + h^{t-1/2}\|g\|_{H^t(\partial\Omega)}.
		$$
		This implies 
		$$
		\|u_h-\tilde u_h^{\rm nc}\|_{L^2(\Omega)}  \lesssim  h\|f\|_{L^2(\Omega)}+h^{t+1/2}\|g\|_{H^t(\partial\Omega)}.
		$$
		Set $e_h:=\tilde u_h^{\rm nc}-u_h^{\rm nc}\in V_{h,0}^{\rm nc}$.
		Subtracting the two versions of \eqref{eq:cr-laplace}
		with $f$ and $f_h$ on the right-hand side, respectively,
		and testing with $e_h$ gives
		$
		\|\nabla_he_h\|_{L^2(\Omega)}^2=(f_h-f,e_h)_{L^2(\Omega)}=(f_h-f,e_h-Q_h^0e_h)_{L^2(\Omega)}.
		$
		Hence, the approximation property of $Q_h^0$ implies
		$\|\nabla_he_h\|_{L^2(\Omega)} \lesssim h\|f-f_h\|_{L^2(\Omega)}$.
		The discrete Poincar\'e inequality
		\cite[(10.6.14)]{BrennerScott2008}
		therefore yields
		$$
		\|\tilde u_h^{\rm nc}-u_h^{\rm nc}\|_{L^2(\Omega)}
		\lesssim \|\nabla_he_h\|_{L^2(\Omega)}
		\lesssim 
	   h\|f-f_h\|_{L^2(\Omega)}
		\lesssim
		h\|f\|_{L^2(\Omega)}.
		$$
		Substituting the preceding estimates and the estimate in Corollary \ref{co:laplace} into \eqref{eq:cr-rt-triangle} and using $s\le1$ completes the proof.
	\end{proof}

	\section{Application to the Stokes equations}
	\label{sec:Stokes}
	
	Let $\Omega$ be a domain as in Section~\ref{sec:Laplacian}.
	We consider the stationary Stokes problem
	\begin{equation}
		\label{eq:stokes}
		-\Delta u + \nabla p = f , \quad 
		\ddiv u =0 \text{ in } \Omega, \quad \text{ and } \quad 
		u=g  \text{ on }\partial \Omega,
	\end{equation}
	assuming that $g$ satisfies the compatibility condition $(g \cdot n,1)_{L^2(\partial \Omega)} =0$.
	Following \cite{CaiWang2010},
	we introduce the pseudo-stress $\sigma = -p I+ \nabla u$ as 
	an auxiliary variable, which is not necessarily symmetric. 
	The Stokes equation \eqref{eq:stokes} can be rewritten as 
	$$
	\Div \sigma = -f, \quad 
	\mathcal{A} \sigma - \nabla u = 0 \text{ in } \Omega, \quad \text{ and } \quad 
	u=g  \text{ on }\partial \Omega,
	$$
	where $\Div$ acts row-wise and $\mathcal{A}$ 
	is the deviatoric tensor defined by 
	$\mathcal{A} \tau  = \tau - m^{-1} \operatorname{tr}\tau  I$
	for any matrix $\tau$. 
	The incompressibility constraint $\operatorname{div} u = \operatorname{tr}(\nabla u) = 0$ is enforced, and the pressure trace $\operatorname{tr}\sigma = -mp$ is uniquely determined by imposing $(\operatorname{tr}\sigma, 1)_{L^2(\Omega)} = 0$. The mixed formulation is based on the spaces
	$$
	\Sigma = \{   \tau \in H(\Div, \Omega; \mathbb{M}): (\operatorname{tr} \tau, 1 )_{L^2(\Omega)} = 0 \}, \quad U= L^2(\Omega; \mathbb{R}^m)
	$$
	with norms $ \|\cdot\|_{\Sigma} = \|\cdot\|_{L^2(\Omega)} + \|\operatorname{Div}\cdot\|_{L^2(\Omega)}$ and  $\|\cdot\|_{U} = \| \cdot \|_{L^2(\Omega)}$.
	The bilinear forms are 
	$$
	a(\sigma, \tau)  = (\mathcal{A} \sigma, \tau)_{L^2(\Omega)}, \quad 
	b(\tau,v) = (\Div \tau, v)_{L^2(\Omega)}  \text{ for all } \sigma, \tau \in \Sigma, v \in U.
	$$
	We take $H = L^2(\Omega; \mathbb{M})$.
	Assumption~\ref{ass:saddle} follows from the pseudostress formulation
	of the Stokes problem \cite{CaiWang2010}.
	Let $s  \in (1/2,1]$ be an admissible elliptic 
	shift exponent for the homogeneous Dirichlet problem of the Stokes equation.
	Its existence is shown in \cite{Grisvard1985,Dauge1989},
	see also
	\cite{Girault1979} and \cite[Section~2.2]{Apel2026}.
	Let  $U^r = H^{1+s}(\Omega; \mathbb{R}^m) \cap H_0^1(\Omega; \mathbb{R}^m)$ with the norm $\| \cdot \|_{U^r} = \| \cdot \|_{H^{1+s}(\Omega)}$  and 
	$$
	\Sigma^r = \Sigma \cap H^{s}(\Omega; \mathbb{M})
	   \text{ with the norm } \|\cdot\|_{\Sigma^r} = \|\cdot\|_{H^{s}(\Omega)} 
	        + \lVert\operatorname{Div}\cdot\rVert_{L^2(\Omega)}.
	$$ 
	Under these assumptions,
	given $z \in L^2(\Omega; \mathbb{R}^m)$, the solution to $T(\phi,w)=(0,z)$ 
	is unique and 
	satisfies \eqref{e:regularity}.
	
	Let $\mathcal T_h$ be a simplicial mesh as in the last section.
	Let $\widehat\Sigma_h$ denote the space of matrix-valued
	functions whose rows belong to the divergence-conforming Raviart--Thomas
	space described in Section~\ref{sec:Laplacian}.
	We consider the conforming discretizations  as
	$$
	\Sigma_h= \{ \tau_h\in\widehat\Sigma_h: (\operatorname{tr}\tau_h,1)_{L^2(\Omega)}=0 \} 
	\quad\text{and}\quad U_h=P_0(\mathcal T_h; \mathbb{R}^m)
	.
	$$
	The stability of this pairing is shown in \cite{CaiWang2010}.
	Let $X= \{  g \in H^t(\partial \Omega; \mathbb{R}^m): (g \cdot n, 1)_{L^2(\partial \Omega)} = 0  \}$ with $0\le t<1/2$. For $g\in X$ we define $\langle g,\tau\rangle
	= ( g,\tau n)_{L^2(\partial \Omega)}$.
	By the boundary-trace estimate established in the
    context of \eqref{ds-po},
	applied row-wise,
	$$
	\|\sigma_h\|_{\Sigma} + \|u_h\|_{L^2(\Omega)}
	\lesssim \|f\|_{L^2(\Omega)} + h^{t-1/2}\|g\|_{H^t(\partial\Omega)}.
	$$
	Let $P_h:U\to U_h$ denote the $L^2$-orthogonal projection,
	which verifies Assumption~\ref{condition1} with $\rho_U=h$.
	Let $\widehat\Pi_h$ be the row-wise commuting Raviart--Thomas projection. 
	To preserve the normalization of $\Sigma_h$, define
	$$
	\Pi_h\phi = \widehat\Pi_h\phi 
	 - (m\operatorname{meas}(\Omega))^{-1} (\operatorname{tr}\widehat\Pi_h\phi,1)_{L^2(\Omega)}  I
	 \quad  \text{ for all } \phi \in \Sigma^r.
	$$
	Since the correction is a constant multiple of the identity matrix,
	it has zero row-wise
	divergence. Therefore 
	$\operatorname{Div}\Pi_h\phi = P_h\operatorname{Div}\phi$, 
	and hence Assumption~\ref{condition2}(i) holds. 
	Furthermore, as in Section~\ref{sec:Laplacian},
	 Assumption~\ref{condition2}(ii) holds with
    $\rho_\Sigma=h^s$.
	The compatibility condition $( g \cdot n, 1)_{L^2(\partial \Omega)} = 0$ gives $\langle g, (\phi - \Pi_h \phi)n \rangle= \langle g, (\phi - \widehat \Pi_h \phi)n \rangle$. Then, following similar procedures for deriving Assumption~\ref{condition2}(iii) in the application of the Laplace equation, one can prove the following estimate
	$$	|\langle g,(\phi-\Pi_h\phi)n\rangle|
	\lesssim h^{s+t-1/2} \|g\|_{H^t(\partial\Omega)} \|\phi\|_{\Sigma^r}.$$
	Therefore $ \rho_X=h^{s+t-1/2}$.  Since $\Sigma_h \subset \Sigma$, Remark \ref{remark:conforming} applies and $\rho_{\rm nc} = 0$. 
	
	\begin{corollary}
		\label{cor:stokes}
		Let $\Omega\subset\mathbb R^m$ be a bounded, open Lipschitz polytope. Assume that, for some
		$1/2<s\leq1$, the homogeneous Dirichlet Stokes problem
		with $L^2$ right-hand sides satisfies
		$H^{1+s}(\Omega;\mathbb R^m)$ regularity for the velocity
		and $H^s(\Omega)$ regularity for the pressure normalized
		to have zero mean. Let $f\in L^2(\Omega;\mathbb R^m)$ and
		$g\in H^t(\partial\Omega;\mathbb R^m)$ with
		$0\leq t<1/2$, and assume $\int_{\partial\Omega}g\cdot n\,ds=0$.
		Let $u$ be the transposition velocity solution of the Stokes equation
                 \eqref{eq:stokes} and let $(\sigma_h,u_h)$ be the
		lowest-order Raviart--Thomas mixed approximation
		in the pseudostress--velocity formulation.
		Then
		\[
		\|u-u_h\|_{L^2(\Omega)}
		\lesssim
		h^s\|f\|_{L^2(\Omega)}
		+
		h^{s+t-1/2}\|g\|_{H^t(\partial\Omega)}.
		\]
	\end{corollary}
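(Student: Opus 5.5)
The plan is to apply Theorem~\ref{thm:error} directly, collecting the verifications of the assumptions made above for the pseudostress--velocity pair. Assumption~\ref{ass:saddle} holds by \cite{CaiWang2010} with $H=L^2(\Omega;\mathbb M)$. The form $a(\sigma,\tau)=(\mathcal A\sigma,\tau)_{L^2(\Omega)}$ is symmetric and positive semidefinite on $H$, since $\mathcal A$ is the orthogonal projection onto trace-free matrices. Assumption~\ref{ass:regularity} is the assumed $H^{1+s}\times H^s$ regularity of the homogeneous Stokes problem. Indeed, for $T(\phi,w)=(0,z)$ the second component $w$ is the velocity, and $\phi=\nabla w-qI$ with a zero-mean pressure $q$, so that
\[
\|\phi\|_{H^s(\Omega)}+\|\Div\phi\|_{L^2(\Omega)}+\|w\|_{H^{1+s}(\Omega)}\lesssim\|z\|_{L^2(\Omega)}.
\]
Assumption~\ref{ass:discrete-stability} is the discrete stability from \cite{CaiWang2010}. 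Assumption~\ref{condition1} holds with $\rho_U=h$ by the elementwise Poincaré inequality. Since $\Sigma_h\subset\Sigma$ and $b_h=b$, Remark~\ref{remark:conforming} gives $\rho_{\rm nc}=0$.

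For Assumption~\ref{condition2}, I use the corrected projection $\Pi_h$. Property (i) follows because $\Div$ annihilates the constant correction $cI$. For (ii), the bound $\|\phi-\widehat\Pi_h\phi\|_{L^2(\Omega)}\lesssim h^s\|\phi\|_{\Sigma^r}$ holds row-wise. The correction term is bounded by $|(\operatorname{tr}\widehat\Pi_h\phi,1)_{L^2(\Omega)}|$. Because $(\operatorname{tr}\phi,1)_{L^2(\Omega)}=0$, this equals $|(\operatorname{tr}(\widehat\Pi_h\phi-\phi),1)_{L^2(\Omega)}|\lesssim h^s\|\phi\|_{\Sigma^r}$, so $\rho_\Sigma=h^s$.

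For (iii), the compatibility condition $(g\cdot n,1)_{L^2(\partial\Omega)}=0$ gives $(g,cIn)_{L^2(\partial\Omega)}=0$, so the correction drops out. Moreover, $(\widehat\Pi_h\phi)n=Q^0_\partial(\phi n)$ row-wise on each boundary face. Using the $L^2$-orthogonality of $1-Q^0_\partial$, I then estimate
\[
\|(1-Q_\partial^0)g\|_{L^2(\partial\Omega)}\|(1-Q_\partial^0)(\phi n)\|_{L^2(\partial\Omega)}\lesssim h^{t}\|g\|_{H^t(\partial\Omega)}\,h^{s-1/2}\|\phi\|_{H^s(\Omega)}.
\]
This step is the main delicate point. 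The trace of $\phi\in H^s$ with $s>1/2$ lies facewise in $H^{s-1/2}$, and $n$ is piecewise constant on the polytope, so the facewise approximation estimate of $Q_\partial^0$ applies. This yields $\rho_X=h^{s+t-1/2}$ with $X$ the compatible subspace of $H^t(\partial\Omega;\mathbb R^m)$, normed by $H^t$.

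Finally, I insert the quantities into Theorem~\ref{thm:error}. The discrete dual norm $\|g\|_{\Sigma_h'}\lesssim h^{t-1/2}\|g\|_{H^t(\partial\Omega)}$ follows from the row-wise application of the argument leading to \eqref{ds-po}. Thus the term $(\rho_\Sigma+\rho_U)(\|f\|_U+\|g\|_{\Sigma_h'})$ is bounded by $h^s\|f\|_{L^2(\Omega)}+h^{s+t-1/2}\|g\|_{H^t(\partial\Omega)}$, using $h\le h^s$ for $h\lesssim1$.

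It remains to bound $\inf_{v_h\in U_h}\|u-v_h\|_U$, because the transposition velocity $u$ is only in $L^2$. I would avoid a direct approximation estimate and instead note that the stronger version of Assumption~\ref{condition2}, valid for all $z\in U$, holds here. All estimates above used only $\phi\in\Sigma^r$ and were never restricted to discrete $z_h$. The first remark after Theorem~\ref{thm:error} then gives the estimate without the best-approximation term, which yields exactly the asserted bound.
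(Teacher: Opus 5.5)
Your proposal is correct and follows the paper's route. You verify the assumptions for the pseudostress $RT_0$--$P_0$ pair with the trace-corrected interpolant $\Pi_h$ (giving $\rho_U=h$, $\rho_\Sigma=h^s$, $\rho_X=h^{s+t-1/2}$, $\rho_{\rm nc}=0$) and apply Theorem~\ref{thm:error}, removing the best-approximation term through the remark that follows it, since Assumption~\ref{condition2} holds for all $z\in U$ here; the paper uses this step only implicitly, and your explicit bound on the trace correction in (ii) via $(\operatorname{tr}\phi,1)_{L^2(\Omega)}=0$ fills in a detail the paper leaves to ``as in Section~\ref{sec:Laplacian}''.
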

	
	\begin{remark}
		For convex polygonal domains in two dimensions
                we have $s=1$. This Corollary gives 
		$ \|u-u_h\|_{L^2(\Omega)} \lesssim h\|f\|_{L^2(\Omega)} + h^{1/2}\|g\|_{L^2(\partial\Omega)}$
		for $g\in L^2(\partial\Omega; \mathbb{R}^m)$,
		which agrees with the optimal very weak Stokes estimates in \cite{Apel2026}.
		We note that our error bound can dispense with logarithmic factors,
		which in \cite{Duran2020} are a consequence of regularizing the boundary
		data.
	\end{remark}
	
	\begin{remark}
		\label{remark:cr-p0}
		Let $V_{h,g}^{\rm nc}, V_{h,0}^{\rm nc}$ be the vector-valued Crouzeix--Raviart
		space defined as in Section~\ref{subsec:CR}.
		The discrete pressure space is $U_h^0 = P_0(\mathcal{T}_h) \cap L_0^2(\Omega)$. 
		The nonconforming $CR$--$P_0$ mixed method for the Stokes equation reads: find $(u_h^{\rm nc},p_h^{\rm nc})
		\in V_{h,g}^{\rm nc}\times U_h^0$ such that
		$$
		\left\{   
		\begin{aligned}
			(\nabla_hu_h^{\rm nc},\nabla_hv_h)_{L^2(\Omega)}
			-(p_h^{\rm nc},\operatorname{div}_hv_h)_{L^2(\Omega)}
			&=(f,v_h)_{L^2(\Omega)}
			&& \text{ for all } v_h\in V_{h,0}^{\rm nc},\\
			(\operatorname{div}_hu_h^{\rm nc},q_h)_{L^2(\Omega)}
			&=0		&& \text{for all } q_h\in U_h^0.
		\end{aligned} \right.
		$$
		Following similar procedures as in the proof in Section~\ref{subsec:CR}, one can obtain 
		$$
		\|u-u_h^{\rm nc}\|_{L^2(\Omega)}
		\lesssim
		h^s\|f\|_{L^2(\Omega)}
		+
		h^{s+t-1/2}
		\|g\|_{H^t(\partial\Omega)}.
		$$
	\end{remark}

	\begin{example}
		\label{ex:stokes1}
		For the two-dimensional Stokes problem, we use the
			domains $\Omega_S$ and $\Omega_L$ and the associated mesh
			families introduced in Example~\ref{ex:laplace}.
			Set $\alpha = -1/3$ and $\omega = \omega_S$ or $\omega_L$.
		The exact velocity and pressure are defined by
		$$
		u(r,\theta)	=	r^\alpha(		\Phi_1(\theta),		\Phi_2(\theta) )^T,	\qquad	\tilde p(r,\theta)	=  r^{\alpha-1}\Phi_p(\theta),
		$$
		where the pressure is normalized by $
		p=\tilde p- \int_\Omega\tilde p\,dx/\operatorname{meas}(\Omega)$ and 
		\begin{align*}
			\Phi_1(\theta)={}&-\sin(\alpha\theta)\cos\omega	-\alpha\sin\theta
			\cos\bigl(\alpha(\omega-\theta)+\theta\bigr)\\
			&\quad +\alpha\sin(\omega-\theta)	\cos(\alpha\theta-\theta)
			+\sin\bigl(\alpha(\omega-\theta)\bigr),\\
			\Phi_2(\theta)	={}&	-\sin(\alpha\theta)\sin\omega	-\alpha\sin\theta
			\sin\bigl(\alpha(\omega-\theta)+\theta\bigr)
			-\alpha\sin(\omega-\theta)
			\sin(\alpha\theta-\theta),\\
			\Phi_p(\theta)
			={}&
			2\alpha
			\left[
			\sin\bigl((\alpha-1)\theta+\omega\bigr)
			+
			\sin\bigl((\alpha-1)\theta-\alpha\omega\bigr)
			\right].
		\end{align*}
		The functions defined above satisfy $-\Delta u+\nabla p=0$ and $\operatorname{div}u=0$ 
		in $\Omega$.
		Since $u$ behaves like $r^{-1/3}$ near the origin, its trace $g$
                belongs to
		$
		H^{1/6-\nu}
		\bigl(\partial\Omega;\mathbb R^2\bigr)
                $
		for all $\nu>0$.
		Table~\ref{tab:stokes1} displays the velocity $L^2$-errors and convergence rates. 
		On the unit square ($s_S=1$), the convergence rates 
		are close to the theoretical order $2/3$, while on the L-shaped domain 
		they are close to the predicted rate $s_L + t - 1/2 = 0.211\ldots$
		(with $s_L = 0.544\ldots$ and $t = 1/6-\nu$).
		\qed
		
\begin{table}[tbp]
	\centering\small
	\begin{tabular}{ccccc}
		\toprule
		& \multicolumn{2}{c}{Unit square}
		& \multicolumn{2}{c}{L-shaped domain}\\
		\cmidrule(lr){2-3}\cmidrule(lr){4-5}
		$h_\ell/\sqrt{2}$
		& $\|u-u_h\|_{L^2(\Omega)}$ & order
		& $\|u-u_h\|_{L^2(\Omega)}$ & order\\
		\midrule
		$2^{-1}$ & $2.3733\mathrm{e}{-1}$ & --
		& $1.9129\mathrm{e}{-1}$ & --\\
		$2^{-2}$ & $1.6508\mathrm{e}{-1}$ & $0.5238$
		& $1.3947\mathrm{e}{-1}$ & $0.4558$\\
		$2^{-3}$ & $1.0903\mathrm{e}{-1}$ & $0.5984$
		& $1.0525\mathrm{e}{-1}$ & $0.4061$\\
		$2^{-4}$ & $7.0175\mathrm{e}{-2}$ & $0.6357$
		& $8.1780\mathrm{e}{-2}$ & $0.3640$\\
		$2^{-5}$ & $4.4760\mathrm{e}{-2}$ & $0.6488$
		& $6.5123\mathrm{e}{-2}$ & $0.3286$\\
		$2^{-6}$ & $2.8414\mathrm{e}{-2}$ & $0.6556$
		& $5.3082\mathrm{e}{-2}$ & $0.2949$\\
		$2^{-7}$ & $1.7985\mathrm{e}{-2}$ & $0.6598$
		& $4.4137\mathrm{e}{-2}$ & $0.2662$\\
		$2^{-8}$ & $1.1364\mathrm{e}{-2}$ & $0.6624$
		& $3.7242\mathrm{e}{-2}$ & $0.2451$\\
		\bottomrule
	\end{tabular}
	\caption{$L^2$ errors and observed convergence orders for the
		pseudo-stress $RT_0$-$P_0$ approximation of the two-dimensional
		Stokes problem with boundary data
		$g \in H^{1/6-\nu}(\partial\Omega)$.}
	\label{tab:stokes1}
\end{table}

	\end{example}

	\section{Application to linear elasticity}
	\label{sec:Elasticity}
	
	Let $\Omega$ be a domain as in Section~\ref{sec:Laplacian}.
	We consider the linear elasticity problem
	$$
	-\Div \mathcal C\varepsilon (u) = f
	\quad\text{in }\Omega
	\qquad\text{and}\qquad
	u=g \quad\text{on }\partial \Omega.
	$$
	Here $\varepsilon (u) = 1/2 (\nabla u + (\nabla u)^T)$ is the 
	symmetric part of the derivative matrix,
	and the elasticity tensor
	$\mathcal C \tau = 2\mu \tau  
	+ \lambda \operatorname{tr}\tau I$
	for any symmetric matrix $\tau$ with material parameters $\mu,\lambda>0$.
	The Hellinger--Reissner formulation is based on the spaces
	$$
	\Sigma =H(\Div,\Omega;\mathbb S) := \{  \tau \in L^2(\Omega;\mathbb{S}): \Div \tau \in L^2(\Omega; \mathbb{R}^m) \}, \quad U = L^2(\Omega;\mathbb{R}^m)
	$$
	equipped with norms $\|\cdot\|_{\Sigma}$ and $\| \cdot \|_U$
	as in Section~\ref{sec:Stokes}.
	Define the bilinear forms
	$$
	a(\sigma,\tau) =(\mathcal{C}^{-1} \sigma, \tau)_{L^2(\Omega)}, \quad 
	b(\tau, v) = (\Div \tau,  v)_{L^2(\Omega)} \text{ for all } \sigma,  \tau \in  \Sigma, v \in U.
	$$ 
	We let $H = L^2(\Omega; \mathbb{S})$.
	Standard theory \cite{arnold2002mixed,BoffiBrezziFortin2013}
	implies the well-posedness of the saddle-point formulation \eqref{eq:classical-saddle}.
	Let $s\in(1/2,1]$ be an admissible regularity-shift exponent for the
	homogeneous Dirichlet elasticity problem.
	In two space dimensions, the existence of such $s$ is shown in
	\cite{Grisvard1985,Grisvard1992}.
	We define
	$ U^r= H^{1+s}(\Omega;\mathbb R^m) \cap H_0^1(\Omega;\mathbb R^m)$ with $\|\cdot \|_{U^r}=\|\cdot \|_{H^{1+s}(\Omega)}$, and
	$$
	\Sigma^r = \Sigma \cap H^s(\Omega;\mathbb S) \text{ with the norm } \|\cdot\|_{\Sigma^r}  = \|\cdot\|_{H^s(\Omega)} + \|\operatorname{Div}\cdot\|_{L^2(\Omega)}.
	$$
		In three dimensions, the analysis below applies under the assumption
		that a corresponding regularity result holds.
	Under this hypothesis on $s$,
	Assumption~\ref{ass:regularity} is satisfied.
	
	Symmetric conforming mixed finite elements for tensor-valued problems 
	typically require high-order polynomial spaces, and the regularity required
	for their full approximation order is not satisfied for irregular 
	boundary data.
	This motivates the use of low-order nonconforming mixed finite element methods in the present setting.
	As an example, we explicitly describe the two-dimensional 
	reduced Arnold--Winther element from \cite{ArnoldWinther2003}.
	For $K\in\mathcal T_h$, let 
	$RM(K)$ denote the space of rigid-body motions as
	functions over $K$, i.e., linear combinations of constant
	vector fields and $(-x_2,x_1)$.
	For every interior face $F$, fix a unit normal $n_F$, 
	and denote by $[\tau_hn_F]$ the corresponding jump. 
	Define the discrete spaces
	$$
	\Sigma_h = \left\{ \tau_h\in \Sigma(\mathcal{T}_h): \int_F [\tau_hn_F]\cdot q\,ds=0 \text{ for all } q\in P_1(F;\mathbb R^2), F\in\mathcal F_h^i \right\}
	$$
	with 
	$$\Sigma(K) := \left\{ \tau\in P_2(K;\mathbb S): n^T\tau n|_F\in P_1(F) \text{ for every }F\in\mathcal F(K),  \operatorname{Div}\tau\in RM(K)\right\}$$
	and $U_h = RM(\mathcal{T}_h)$, the piecewise rigid-body motions.
	A three-dimensional analogue is 
	given in \cite{Arnold2014}. The subsequent analysis is formulated 
	uniformly for $m=2,3$ and relies only on the structural properties 
	shared by these elements. Throughout this section, 
	$F$ denotes an $(m-1)$-dimensional mesh facet. 
	
Since $\Sigma_h\not\subset H(\Div,\Omega;\mathbb S)$, we use the piecewise 
action $\Div_h$ of the divergence and set
$$ b_h(\tau,v) = (\Div_h\tau,v)_{L^2(\Omega)}, \quad
\text{ for all } \tau\in \Sigma + \Sigma_h, v\in U.$$
The discrete graph norm is $\|\tau_h\|_{\Sigma,h}^2
:= \|\tau_h\|_{L^2(\Omega)}^2 +
\lVert \Div_h\tau_h\rVert_{L^2(\Omega)}^2$ for any $\tau_h \in \Sigma_h$. 
The stability of this pair is established in
\cite{ArnoldWinther2003,Arnold2014}.
Let $X=H^t(\partial\Omega;\mathbb R^m)$ with $0\leq t<1/2$.
For $g\in X$, define the discrete boundary functional by
$\langle g,\tau_h\rangle=
(g,\tau_hn)_{L^2(\partial\Omega)}$ for $\tau_h\in\Sigma_h$.
For $t=0$, $ |\langle g,\tau_h\rangle| \le
\|g\|_{L^2(\partial\Omega)} \|\tau_hn\|_{L^2(\partial\Omega)}\lesssim
h^{-1/2} \|g\|_{L^2(\partial\Omega)} \|\tau_h\|_{L^2(\Omega)}$. For $0<t<1/2$, there exists a boundary lifting $G\in H^{1/2+t}(\Omega;\mathbb R^m)$ such that  $G|_{\partial\Omega}=g$ and  $\|G\|_{H^{1/2+t}(\Omega)}\lesssim
\|g\|_{H^{t}(\partial\Omega)}$. An integration by parts gives 
\begin{equation}
	\label{ds-el:1}
	\langle g,\tau_h\rangle = (\operatorname{Div}_h\tau_h,G)_{L^2(\Omega)} + \langle \tau_h,\varepsilon(G)\rangle_\Omega- \sum_{F\in\mathcal F_h^i} ([\tau_hn_F],G)_{L^2(F)}.
\end{equation}
Here, $\langle \tau_h,\varepsilon(G)\rangle_\Omega$ is the duality
product between $H^{1/2-t}(\Omega; \mathbb{S})$ and $H^{t-1/2}(\Omega; \mathbb{S})$.
The first term can be estimated by the Cauchy-Schwarz inequality $$
(\operatorname{Div}_h\tau_h,G)_{L^2(\Omega)} 
 \leq \lVert \Div_h \tau_h \rVert_{L^2(\Omega)} \| G\|_{L^2(\Omega)} \leq \|g\|_{H^t(\partial \Omega)} \| \tau_h \|_{\Sigma,h}.
$$
For the second term, the inverse estimate shows 
$$
\langle \tau_h,\varepsilon(G)\rangle_\Omega
\leq \| \tau_h \|_{H^{1/2-t}(\Omega)} \| \varepsilon (G) \|_{H^{t-1/2}(\Omega)} \lesssim h^{t-1/2}\|g\|_{H^{t}(\partial \Omega)}\| \tau_h\|_{L^2(\Omega)}. 
$$
By the discrete continuity conditions on $\Sigma_h$, it holds that
$$
([\tau_hn_F],G)_{L^2(F)} = ([\tau_hn_F],G - Q_F^1 G)_{L^2(F)} \leq \| [\tau_hn_F] \|_{L^2(F)} \| G - Q_F^1 G\|_{L^2(F)}.
$$
The trace, inverse, and interpolation error estimates
prove 
$([\tau_hn_F],G)_{L^2(F)} \lesssim h^{t-1/2} \| g\|_{H^{t}(\partial \Omega)} \| \tau_h\|_{\Sigma,h}$.
Substituting these estimates into \eqref{ds-el:1} 
and combining with the case $t=0$ shows
$$
\| \sigma_h \|_{\Sigma, h } + \| u_h \|_U \lesssim \| f\|_{L^2(\Omega)} + h^{t-1/2} \| g\|_X.
$$

Let $P_h:U\to U_h$ denote the
$L^2$-orthogonal projection, which satisfies Assumption~\ref{condition1}
with $\rho_U=h$.
We introduce the canonical interpolation operator $\Pi_h:\Sigma^r\to\Sigma_h$
defined locally by the degrees of freedom
\begin{equation}
	\label{eq:elasticity-interpolation}
	\int_F
	\bigl((\phi-\Pi_h\phi)n_F\bigr)\cdot q\,ds
	=0
	\qquad
	\text{for all }q\in P_1(F;\mathbb R^m), F \in \mathcal{F}_h.
\end{equation}
Then, $\operatorname{Div}_h\Pi_h\phi
=P_h\operatorname{Div}\phi$ for all $\phi \in\Sigma^r$, which implies Assumption~\ref{condition2}(i). The approximation estimate for the reduced interpolation operator follows from the scaling argument in \cite[Section 5]{ArnoldWinther2003} and interpolation
$$
\|\phi-\Pi_h\phi\|_{L^2(\Omega)}
\lesssim
h^s\|\phi\|_{\Sigma^r},
$$
and hence Assumption~\ref{condition2}(ii) holds with $\rho_\Sigma=h^s$.
By the definition of $\Pi_h$ in \eqref{eq:elasticity-interpolation}, it follows
$$
\langle g,\phi-\Pi_h\phi\rangle= \sum_{F \in \mathcal F_h^\partial} \int_F (g-Q_F^1g)\cdot(\phi-\Pi_h\phi)n\,ds \lesssim
h^{s+t-1/2}\|g\|_{H^t(\partial\Omega)} \|\phi\|_{\Sigma^r}.
$$
Thus Assumption~\ref{condition2}(iii) holds with $\rho_X=h^{s+t-1/2}$. It remains to verify the nonconforming consistency condition. 
 Let $(\phi,w)\in\Sigma^r\times U^r$ solve $a(\phi, \tau)+b(\tau,w)= 0$
 for all $\tau\in\Sigma$.
 Since $\mathcal{C}^{-1}\phi=\varepsilon(w)$, element-wise integration by parts, $w=0$ on $\partial\Omega$, and the moment continuity of $\tau_hn_F$ give for $R_h$ defined in Assumption~\ref{condition3} that
$$
	R_h(\tau_h;\phi,w)
	 = \sum_{F \in\mathcal F_h^i} \int_F [\tau_hn_F]\cdot w\,ds 
	=  \sum_{F\in\mathcal F_h^i} \int_F [\tau_hn_F]\cdot(w-Q_F^1w)\,ds.
$$
The trace approximation estimate and the polynomial inverse trace inequality give
$$
|R_h(\tau_h;\phi,w)| \lesssim h^s \|\tau_h\|_{\Sigma,h} \|w\|_{U^r} .
$$
Thus Assumption~\ref{condition3} holds with $\rho_{\rm nc}=h^s$.

	\begin{corollary}
		\label{cor:ncaw-transposition}
 Let $\Omega\subset\mathbb R^m$ be an open and bounded Lipschitz polytope and
 assume that, for some $1/2<s\le1$, the homogeneous Dirichlet elasticity problem
 with $L^2$ right-hand sides satisfies an
	$H^{1+s}(\Omega;\mathbb R^m)$ regularity assumption.
	Let $f\in L^2(\Omega;\mathbb R^m)$ and  $g\in H^t(\partial\Omega;\mathbb R^m)$ with  $0\le t< 1/2$.
	Then the transposition solution $u$ of the linear elasticity problem and 
	the reduced Arnold--Winther solution $(\sigma_h,u_h)$
	satisfy
		\begin{equation*}
			\|u-u_h\|_{L^2(\Omega)} \lesssim h^s\|f\|_{L^2(\Omega)} + h^{s+t-1/2} \|g\|_{H^t(\partial\Omega)}.
		\end{equation*}
	\end{corollary}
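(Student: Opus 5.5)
The plan is to apply Theorem~\ref{thm:error} with the quantities $\rho_U$, $\rho_\Sigma$, $\rho_X$, $\rho_{\mathrm{nc}}$ verified in the preceding discussion of this section. Afterwards, we only need to bound the best-approximation term $\inf_{v_h\in U_h}\|u-v_h\|_U$ and the discrete data norm $\|g\|_{\Sigma_h'}$ by the claimed rates.

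\emph{Step 1: hypotheses.} Assumption~\ref{ass:saddle} is the Hellinger--Reissner well-posedness, and Assumption~\ref{ass:regularity} is the assumed $H^{1+s}$ shift. Assumption~\ref{ass:discrete-stability} follows from \cite{ArnoldWinther2003,Arnold2014}. The remaining assumptions hold with
\[
\rho_U=h,\quad \rho_\Sigma=h^s,\quad \rho_X=h^{s+t-1/2},\quad \rho_{\mathrm{nc}}=h^s .
\]
We also need that the reduced element satisfies the stated $b_h$-continuity, which is immediate from the definition of $\|\cdot\|_{\Sigma,h}$. The stability bound derived above gives $\|g\|_{\Sigma_h'}\lesssim h^{t-1/2}\|g\|_{H^t(\partial\Omega)}$. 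Since $s\le 1$, we have $(\rho_\Sigma+\rho_U+\rho_{\mathrm{nc}})\lesssim h^s$. The theorem then yields
\[
\|u-u_h\|_{L^2(\Omega)}\lesssim \inf_{v_h}\|u-v_h\|_{L^2(\Omega)}+h^s\|f\|_{L^2(\Omega)}+h^{s+t-1/2}\|g\|_{H^t(\partial\Omega)} .
\]

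\emph{Step 2: best approximation, which is the main obstacle.} The transposition solution is only in $L^2$ a priori, so $\inf_{v_h}\|u-v_h\|$ needs a rate. I would prove that $u\in H^{t+1/2-\nu}$ is not needed. Instead, I would avoid the infimum altogether via the strengthened remark after Theorem~\ref{thm:error}: Assumption~\ref{condition2} holds for all $z\in U$, not only $z_h\in U_h$. The reason is that the canonical interpolation \eqref{eq:elasticity-interpolation} and the commuting property $\Div_h\Pi_h\phi=P_h\Div\phi$ are defined for every $\phi\in\Sigma^r$. Likewise, estimates (ii) and (iii) only used $\phi\in\Sigma^r$ and the regularity bound \eqref{e:regularity}, which holds for any $z\in U$.

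Concretely, I would rerun the proof with $e_h:=u-u_h$ and $(\phi,w)$ solving $T(\phi,w)=-(0,e_h)$. The identity $\|e_h\|^2=(u,e_h)-b(\phi,u_h)$ still holds, and $b(\phi,u_h)=b_h(\Pi_h\phi,u_h)$ still holds because $u_h\in U_h$ and $P_h$ is the $L^2$ projection onto $U_h$. The rest is verbatim and gives the remark's bound without the infimum.

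Alternatively, I would use that $P_h$ is the $L^2$ projection onto piecewise rigid motions. Then $b_h(\tau_h,w-P_hw)=(\Div_h\tau_h,w-P_hw)=0$, because $\Div_h\tau_h\in RM(\mathcal T_h)$. This also removes the $\rho_U\|\sigma_h\|_{\Sigma,h}$ term, matching the second estimate in that remark.

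\emph{Step 3: conclusion.} Inserting $\rho$'s and the discrete stability bound gives the claimed $h^s\|f\|_{L^2(\Omega)}+h^{s+t-1/2}\|g\|_{H^t(\partial\Omega)}$.

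The delicate points are checking that the trace term in (iii) uses $Q_F^1$-orthogonality on boundary facets with the sharp $h^{t}$ and $h^{s-1/2}$ scalings, and confirming the nonconformity bound $|R_h|\lesssim h^s\|\tau_h\|_{\Sigma,h}\|w\|_{H^{1+s}}$. The latter uses the inverse trace inequality on $[\tau_hn_F]$ together with $\|w-Q_F^1w\|_{L^2(F)}\lesssim h^{1/2+s}\|w\|_{H^{1+s}(\omega_F)}$.
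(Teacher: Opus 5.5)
Your proposal is correct and follows the paper's route: the corollary comes from the Section~5 verification of Assumptions~\ref{ass:saddle}--\ref{condition3} (with $\rho_U=h$, $\rho_\Sigma=\rho_{\mathrm{nc}}=h^s$, $\rho_X=h^{s+t-1/2}$ and $\|g\|_{\Sigma_h'}\lesssim h^{t-1/2}\|g\|_{H^t(\partial\Omega)}$), combined with the remark after Theorem~\ref{thm:error}, which drops the best-approximation term because the commuting interpolant works for every $\phi\in\Sigma^r$. Your extra observation that $b_h(\tau_h,w-P_hw)=0$, since $\Div_h\Sigma_h\subseteq RM(\mathcal T_h)=U_h$, is valid but not needed, because $\rho_U=h\lesssim h^s$ anyway.
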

	
	\begin{example}
		We consider the two-dimensional linear elasticity problem on the
		domains $\Omega_S$ and $\Omega_L$ from
                Example~\ref{ex:laplace}.
		The Lam\'e parameters are fixed as $\lambda=10^6$ and $\mu=1$. 	The exact displacement field is constructed from a harmonic-gradient potential:
		$$ \phi(r,\theta)
		=r^{1+\alpha}\sin\bigl((1+\alpha)\theta + \pi/4 \bigr),
		\qquad u=\nabla\phi, \qquad \alpha=-1/3. $$
		The body force vanishes ($f = 0$), and the Dirichlet condition $g = u|_{\partial\Omega}$ is prescribed on the entire boundary. 
		It satisfies $g\in H^{1/6-\nu}(\partial\Omega; \mathbb{R}^2)$ 
		for any $\nu > 0$.
		On the unit square $\Omega_S$, the homogeneous Dirichlet dual problem
		admits the regularity shift $s_S=1$.
		It is known  \cite{Sandig1989}
		that on the L-shaped domain $\Omega_L$ with its reentrant angle 
		$\omega_L=3\pi/2$, the leading corner exponent $\kappa_1$ is the
		smallest positive root of
		$$
		(\lambda+\mu)^{-2}(\lambda+3\mu)^2
		\sin^2(\kappa\omega_L)
		-
		\kappa^2\sin^2\omega_L
		=0.
		$$
		For the chosen Lam\'e parameters, $\kappa_1=0.544\ldots$, 
		and every $s_L$ with $1/2<s_L<\kappa_1$ is an admissible
		regularity-shift exponent.
		Consequently, Corollary~\ref{cor:ncaw-transposition} yields the convergence orders $h^{2/3-\nu}$ and $h^{\kappa_1-1/3-\nu}$ 
               for any $\nu>0$ for $\Omega_S$ and $\Omega_L$, respectively, 
		where $\kappa_1-1/3=0.211\ldots$. 
		Table~\ref{tab:elasticity} displays the displacement $L^2$-errors
		and observed convergence orders for the reduced nonconforming
		Arnold--Winther method.
		On $\Omega_S$, the observed orders approach the reference value
		$2/3$.
		On $\Omega_L$, the orders on the finer meshes decrease towards
		the reference value $0.211\ldots$.
		These results are consistent with the reduced convergence rate
		associated with the reentrant corner.
		\qed
		
				\begin{table}[tbp]
			\centering\small
			\begin{tabular}{ccccc}
				\toprule
				& \multicolumn{2}{c}{Unit square} & \multicolumn{2}{c}{L-shaped domain}\\
				\cmidrule(lr){2-3}\cmidrule(lr){4-5}
				$h_\ell/\sqrt{2}$ & $\|u-u_h\|_{L^2(\Omega)}$ & order & $\|u-u_h\|_{L^2(\Omega)}$ & order\\
				\midrule
				$2^{-1}$ & $1.2419\mathrm{e}{-1}$ & -- & $2.0601\mathrm{e}{-1}$ & --\\
				$2^{-2}$ & $8.0908\mathrm{e}{-2}$ & $0.6182$ & $1.6149\mathrm{e}{-1}$ & $0.3512$\\
				$2^{-3}$ & $5.1950\mathrm{e}{-2}$ & $0.6391$ & $1.1902\mathrm{e}{-1}$ & $0.4403$\\
				$2^{-4}$ & $3.3111\mathrm{e}{-2}$ & $0.6498$ & $8.8904\mathrm{e}{-2}$ & $0.4209$\\
				$2^{-5}$ & $2.1010\mathrm{e}{-2}$ & $0.6562$ & $6.8541\mathrm{e}{-2}$ & $0.3753$\\
				$2^{-6}$ & $1.3295\mathrm{e}{-2}$ & $0.6602$ & $5.4658\mathrm{e}{-2}$ & $0.3265$\\
				$2^{-7}$ & $8.3989\mathrm{e}{-3}$ & $0.6626$ & $4.4836\mathrm{e}{-2}$ & $0.2858$\\
				$2^{-8}$ & $5.3003\mathrm{e}{-3}$ & $0.6641$ & $3.7529\mathrm{e}{-2}$ & $0.2566$\\
				$2^{-9}$ & $3.3427\mathrm{e}{-3}$ & $0.6651$ & $3.1826\mathrm{e}{-2}$ & $0.2378$\\
				\bottomrule
			\end{tabular}
			\caption{$L^2$ displacement errors and observed convergence orders for the reduced nonconforming Arnold--Winther approximation of the two-dimensional linear elasticity problem with $\lambda=10^6$, $\mu=1$ and boundary data $g \in H^{1/6-\nu}(\partial\Omega)$.}
			\label{tab:elasticity}
		\end{table}
	\end{example}

	\section{Application to the biharmonic equation}
	\label{sec:Biharmonic}
	
	Let $\Omega \subseteq \mathbb{R}^2$ be an open, bounded, simply-connected, 
	Lipschitz polygon. In this section, we only consider the planar case.
	We consider the biharmonic equation with clamped boundary conditions
	\begin{equation}
	\label{eq:biharmonic}
	\Delta^2 u = f \quad\text{in }\Omega \quad u= g_{1}, \quad \partial_{n} u = g_{2} \text{ on }\partial\Omega,
	\end{equation}
	where $\partial_n$ is the normal derivative along $\partial \Omega$ and  $\Delta^2$ is the biharmonic operator. 
	
	The mixed formulation is based on the moment variable $\sigma =- \nabla^2 u$ 
	and the spaces 
	$$
	\Sigma = H(\dDiv, \Omega; \mathbb S) := \{  \tau \in L^2(\Omega; \mathbb{S}): \dDiv  \tau \in L^2(\Omega)\} \text{ and } U = L^2(\Omega)
	$$ 
	with norms $\|\cdot\|_\Sigma = \| \cdot\|_{L^2(\Omega)} + \| \dDiv \cdot\|_{L^2(\Omega)}$ and $\| \cdot \|_U = \| \cdot\|_{L^2(\Omega)}$, respectively. 
	Define the bilinear forms  
	$$
	a(\sigma,\tau)  =(\sigma, \tau)_{L^2(\Omega)}, \quad 
	b(\tau, v) = (\dDiv \tau, v)  \text{ for all }  \sigma, \tau \in \Sigma, v \in U.
	$$ 
	We let $H = L^2(\Omega; \mathbb{S})$ with the $L^2$ norm.
	The well-posedness of this mixed formulation follows from \cite{ChenHuang2020}.
	It is known \cite{blum1980boundary,Grisvard1985} that
	the clamped biharmonic problem over a polygon 
	with homogeneous boundary data
	possesses $H^{2+s}(\Omega)$ regularity with some
	$1/2<s\leq 2$, 	if $f\in L^2(\Omega)$.
	We refer to $s$ as an admissible regularity index for the
	homogeneous clamped biharmonic problem and define
	$ U^r= H^{2+s}(\Omega) \cap H_0^2(\Omega)$ 
	with $\|\cdot \|_{U^r}=\|\cdot \|_{H^{2+s}(\Omega)}$,
	and
	$$
	\Sigma^r = \Sigma  \cap H^s(\Omega;\mathbb S) \text{ with the norm }
	\|\cdot\|_{\Sigma^r} =\|\cdot\|_{H^s(\Omega)} + 
	\lVert \dDiv \cdot\rVert_{L^2(\Omega)}.
	$$
	Then, for every $z\in L^2(\Omega)$,  the solution to $T(\phi, w)=(0,z)$ is unique and  satisfies \eqref{e:regularity},
	which implies Assumption \ref{ass:regularity}. 
	
	In this section, the regularity of traces will be described by
	the symbols $t_1$, $t_2$. Therefore, we can
	denote the unit tangent vector to $\partial\Omega$
	by $t=(-n_2,n_1)^T$ without conflicting notation.
	In this section, we will use the space
	$H^1(\partial\Omega)$ with norm
	$\|v\|_{H^1(\partial\Omega)}^2
	  = \|v\|_{L^2(\partial\Omega)}^2
	   + \| \partial_t v\|_{L^2(\partial\Omega)}^2$
	and
	$H^\nu(\partial\Omega)$ for $1<\nu<3/2$,
	with norm
	$\|v\|_{H^\nu(\partial\Omega)}^2
	  = \|v\|_{H^1(\partial\Omega)}^2
	   + | \partial_t v|_{H^{\nu-1}(\partial\Omega)}^2$
	for the boundary of the planar Lipschitz polygon $\Omega$.
	It is proven in \cite[Theorem 6.1]{ArnoldScottVogelius1988}
	that for these $\nu$ the space coincides with
	the trace space of $H^{1/2+\nu}(\Omega)$.
	For a scalar function $a$ and vector function $q$, define
	the following rotated-gradient (vector curl)
	operations in the plane
		$$
		\nabla^\perp a = (\partial_2 a, -\partial_1 a)^T, 
		\quad 
		\nabla^\perp q = \begin{pmatrix} \partial_2 q_1 & -\partial_1 q_1 \\ \partial_2 q_2 & -\partial_1 q_2 \end{pmatrix}, 
		\quad \nabla^\perp_{\mathbb S} q = \operatorname{sym}(\nabla^\perp q).
		$$
	
	In the following, we consider the conforming triangle element proposed by
        F\"uhrer and Heuer
	\cite{FuhrerHeuer2025}.
	Three-dimensional conforming elements can be found in 
        \cite{ChenHuang2022, HuMaZhang2021}, but are not discussed here.
	Let $\mathcal T_h$ be a triangulation of $\Omega$ as in prior
	sections.
	Following \cite{FuhrerHeuer2025},
	let $U_h = P_1(\mathcal{T}_h)$ and 
	$$
	\Sigma_h=\{\tau_h\in H(\dDiv,\Omega;\mathbb S):
	\tau_h|_K\in \nabla^\perp_{\mathbb S}(RT_2(K))\oplus xx^TP_1(K) 
	\text{ for all }K\in\mathcal T_h\},
	$$
	$RT_2(K)=P_2(K;\mathbb R^2)+xP_2(K)$ denotes the local Raviart--Thomas
	space of second order \cite{BoffiBrezziFortin2013}.
	The dimension of the local shape function space for $\Sigma_h$ is $15$.
	The pair $(\Sigma_h,U_h)$ is uniformly inf-sup stable
	\cite{FuhrerHeuer2025} and, thus, Assumption~\ref{ass:discrete-stability}
	is satisfied. Let $X:=H^{t_1}(\partial\Omega) \times H^{t_2}(\partial\Omega)$ with $1\leq t_1 < 3/2$ and $0 \leq t_2 < 1/2$.  Due to the low regularity of $\phi \in \Sigma^r$, the definition of the 
	trace pairing and the construction of the interpolation operator are 
	not obvious. For this, we introduce the following lemma.
			
		We note that the kernel of the symmetric curl equals
		the three-dimensional space $\mathit{RT}$ in the plane,
		spanned by the constant vector fields
		and the identity mapping $x$.
		In what follows we 
		denote the space of $H^1$ fields that are $L^2$-orthogonal to
		$\mathit{RT}$ by $H^1(\Omega;\mathbb R^2)/\mathit{RT}$.
		
		\begin{lemma}[regular decomposition]
			\label{lem:decomposition}
			Let $\Omega\subset\mathbb R^2$ be an open, bounded, simply-connected
			Lipschitz polygon. 
			Every $\tau\in\Sigma$ admits a unique decomposition
			$\tau=pI+\nabla^\perp_{\mathbb S}q$,
			with $p\in H_0^1(\Omega)$ and 
			$q\in H^1(\Omega;\mathbb R^2)/\mathit{RT}$.
			Moreover, $p\in H^{3/2+\varepsilon}(\Omega)$ for 
                        the
			$0<\varepsilon \leq 1/2$ describing the elliptic
                        regularity of the Laplacian on the polygon $\Omega$.
			The functions $p$ and $q$ depend linearly on $\tau$ and satisfy
			\[
			\|p\|_{H^{3/2+\varepsilon}(\Omega)}
			\lesssim \|\dDiv\tau\|_{L^2(\Omega)},
			\qquad
			\|q\|_{H^1(\Omega)}\lesssim\|\tau\|_\Sigma.
			\]
			If additionally $\tau\in H^\alpha(\Omega;\mathbb S)$ for some
			$0\le\alpha\le3/2+\varepsilon$, then
			$q \in H^{1+\alpha}(\Omega;\mathbb R^2)$ and it
			satisfies
			\begin{equation}
				\label{eq:decomposition}
				\|q\|_{H^{1+\alpha}(\Omega)}
				\lesssim
				\|\tau\|_{H^\alpha(\Omega)}
				+\|\dDiv\tau\|_{L^2(\Omega)}.
			\end{equation}
		   If $\tau$ additionally belongs to $H^s(\Omega;\mathbb{S})$
		   with $s>1/2$ or $\tau$ is
		   piecewise polynomial, then
			$q|_{\partial \Omega}\in H^1(\partial \Omega;\mathbb R^2)$.
		\end{lemma}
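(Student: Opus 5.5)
The plan is to peel off the scalar part by a Dirichlet Laplace problem, and then to represent the divDiv-free remainder as a symmetric curl via the exact divDiv complex in the plane. The starting identity is $\dDiv(pI)=\Delta p$. Given $\tau\in\Sigma$, I define $p\in H_0^1(\Omega)$ as the solution of $\Delta p=\dDiv\tau$ with homogeneous Dirichlet data. The elliptic shift for the Laplacian on the polygon (the same $H^{1+s}$ regularity, with $s=1/2+\varepsilon$, used in Section~\ref{sec:Laplacian}) then gives $p\in H^{3/2+\varepsilon}(\Omega)$ and $\|p\|_{H^{3/2+\varepsilon}(\Omega)}\lesssim\|\dDiv\tau\|_{L^2(\Omega)}$.

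The remainder $\sigma:=\tau-pI\in L^2(\Omega;\mathbb S)$ satisfies $\dDiv\sigma=0$. For this step I invoke the exactness of the planar divDiv complex $\mathit{RT}\hookrightarrow H^1(\Omega;\mathbb R^2)\xrightarrow{\nabla^\perp_{\mathbb S}}H(\dDiv,\Omega;\mathbb S)\xrightarrow{\dDiv}L^2(\Omega)$ on simply-connected Lipschitz domains (Pauly--Zulehner, Chen--Huang \cite{ChenHuang2020}). If a self-contained argument is preferred, it runs as follows. Since $\ddiv(\Div\sigma)=0$, one has $\Div\sigma=\nabla^\perp\psi$ with $\psi\in L^2$. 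Then $\sigma-\psi I$ is row-wise divergence free, hence equal to $\nabla^\perp q$. Symmetry of $\sigma$ forces $\psi$ to be determined by $\ddiv q$, which yields $\sigma=\nabla^\perp_{\mathbb S}q$ with $q\in H^1$. Normalising $q$ to be $L^2$-orthogonal to $\mathit{RT}$ and applying the Korn-type inequality for the symmetric curl (a rotated symmetric gradient) gives $\|q\|_{H^1(\Omega)}\lesssim\|\sigma\|_{L^2(\Omega)}\lesssim\|\tau\|_\Sigma$.

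Uniqueness is checked directly. If $pI=\nabla^\perp_{\mathbb S}q$, then $\Delta p=\dDiv(pI)=0$, and $p\in H_0^1$ gives $p=0$. Hence $q$ lies in the kernel of $\nabla^\perp_{\mathbb S}$, which is $\mathit{RT}$, so $q=0$ by the normalisation.

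For the higher regularity \eqref{eq:decomposition}, let $\tau\in H^\alpha$ with $\alpha\le 3/2+\varepsilon$. Then $\sigma=\tau-pI\in H^\alpha$ with norm bounded by the right-hand side of \eqref{eq:decomposition}. Rotating $q$ turns $\nabla^\perp_{\mathbb S}q=\sigma$ into a symmetric-gradient equation $\varepsilon(\tilde q)=\tilde\sigma$. The Saint-Venant identity $\partial_j\partial_k\tilde q_i=\partial_j\tilde\sigma_{ik}+\partial_k\tilde\sigma_{ij}-\partial_i\tilde\sigma_{jk}$ expresses all second derivatives of $\tilde q$ through first derivatives of $\tilde\sigma$. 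Nečas' lemma (a distribution in $H^{\alpha-1}$ whose gradient is in $H^{\alpha-1}$ belongs to $H^{\alpha}$) then lifts $\nabla q$ to $H^\alpha$. This holds for integer orders, and fractional $\alpha$ follows by interpolation of the linear solution map.

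For the boundary trace, the key identity is that, with $t=(-n_2,n_1)^T$, one has $\partial_t q=(\nabla^\perp q)n$. If $\tau\in H^s$ with $s>1/2$, then $q\in H^{1+s}$, so $\nabla q$ has an $L^2$ trace on each edge, and $q|_{\partial\Omega}\in H^1(\partial\Omega)$, using continuity of $q$ across corners.

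The main obstacle I expect is the piecewise-polynomial case. There $\tau$ is only in $H^{1/2-\delta}$ globally, so the trace must be obtained from the structure of $\tau$ rather than from global Sobolev regularity. My approach is to note that $\sigma=\tau-pI$ is the sum of a piecewise polynomial and an $H^{3/2+\varepsilon}$ function. On each element touching the boundary, the Saint-Venant relations then give $\nabla q|_K$ as a sum of a polynomial and an $H^{1/2+\varepsilon}$ function, so $\nabla q$ has an $L^2$ trace on each boundary edge. Continuity of $q\in H^1$ lets these local tangential derivatives glue into $\partial_t q\in L^2(\partial\Omega)$. The subtle points are the antisymmetric part of $\nabla^\perp q$, which is not seen by $\sigma$ directly, and the interface contributions. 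The antisymmetric part has to be controlled through the same second-derivative identity applied elementwise, together with the fact that $\dDiv\sigma=0$ holds across element interfaces.
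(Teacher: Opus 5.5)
Your proposal is essentially correct and shares the paper's skeleton. In both, $p$ comes from the Dirichlet problem $\Delta p=\dDiv\tau$, the $\dDiv$-free remainder gets a symmetric-curl potential, and uniqueness follows from $\Delta p=0$ together with orthogonality to $\mathit{RT}$. The piecewise-polynomial case also uses elementwise second-derivative identities; the paper's three identities are exactly your Saint-Venant relations written out for $\nabla^\perp_{\mathbb S}$.

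The genuine difference is how $q$ and its bounds are obtained. The paper applies the Costabel--McIntosh potential theorem twice: first it finds $a\in H^\alpha$ with $\nabla^\perp a=\Div\eta$, then it applies the theorem row-wise to the divergence-free field $\eta+aJ$. Because that theorem holds in $H^\alpha$ for all $\alpha$, existence, the $H^1$ bound and \eqref{eq:decomposition} all come from one construction.

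You split these into three steps:
\begin{itemize}
\item existence from the exact $\dDiv$ complex;
\item the $H^1$ bound from Korn's inequality after the rotation $\tilde q=(q_2,-q_1)$, which maps $\mathit{RT}$ onto the rigid motions and turns $\nabla^\perp_{\mathbb S}q$ into an isometric rearrangement of $\varepsilon(\tilde q)$;
\item higher regularity from Saint-Venant plus a Lions/Ne\v{c}as lemma.
\end{itemize}
This is more elementary and shows that only the existence step uses simple connectivity. The price is that for fractional $\alpha$ you need the Lions lemma in fractional and negative order on Lipschitz domains, and you should cite it directly. ``Interpolating the linear solution map'' is not immediate: that map is defined only on the closed subspace of $\dDiv$-free tensors, and interpolation need not commute with passing to such subspaces.

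Two corrections are needed.

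\emph{The correction matrix in your existence sketch must be skew.} As written, $\sigma-\psi I$ is not row-wise divergence free, because $\Div(\psi I)=\nabla\psi\neq\nabla^\perp\psi$. Take $\sigma+\psi J$ with $J=\bigl(\begin{smallmatrix}0&-1\\1&0\end{smallmatrix}\bigr)$; then $\Div(\psi J)=-\nabla^\perp\psi$, so $\sigma+\psi J$ is row-wise divergence free. Only a skew correction disappears when you take symmetric parts, which gives $\sigma=\nabla^\perp_{\mathbb S}q$ with $\psi=\tfrac12\ddiv q$. Note also that obtaining $\psi\in L^2$ from $\Div\sigma\in H^{-1}$ is precisely the negative-order potential result the paper cites.

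\emph{The piecewise-polynomial case is easier than you fear, but the gluing needs care.} The skew part and the interfaces are not real obstacles. Your identity expresses all second derivatives of $q$, including those of $\ddiv q$, through first derivatives of $\sigma$ inside each open element. Hence $q|_K\in H^{5/2+\varepsilon}(K)\subset C^0(\overline K)$, which is better than the $H^{1/2+\varepsilon}$ you state for $\nabla q|_K$, and no interface information on $\dDiv\sigma$ is used. What does need care is gluing the edgewise traces, since $q\in H^1(\Omega)$ alone is not continuous. Either argue as the paper does, or observe that the $H^{1/2}(\partial\Omega)$ trace cannot jump between edgewise $H^1$ pieces at a vertex. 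The paper's argument: the continuous representatives on closed elements agree on shared edges because $q\in H^1(\Omega)$, hence they agree at vertices.
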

		
		\begin{proof}
		    The proof follows ideas from \cite[Theorem 3.1]{KrendlRafetsederZulehner2016}.
			Let $p\in H_0^1(\Omega)$ denote the solution to the Poisson
			problem
			$\Delta p=\dDiv\tau$. Elliptic regularity on polygons 
			\cite{Grisvard1985} shows, for some $0<\varepsilon \leq 1/2$,
			that
			$
			\|p\|_{H^{3/2+\varepsilon}(\Omega)} 
			\lesssim\lVert\dDiv\tau\rVert_{L^2(\Omega)}
			$. 
			Furthermore,
			$\eta:=\tau-pI$ satisfies $\dDiv\eta=0$. 
			If $\eta\in H^\alpha(\Omega;\mathbb S)$,
			\cite[Theorem 4.9]{CostabelBogovskii2010} on the simply-connected
			domain $\Omega$ shows the existence
			$a\in H^\alpha(\Omega)$ with
			$\nabla^\perp a=\Div\eta$ and
			$\|a\|_{H^\alpha(\Omega)}\lesssim\|\eta\|_{H^\alpha(\Omega)}$.
			For $J=(0,-1;1,0)$, the rows of
			$\eta+aJ$ are divergence-free. 
                        Invoking the same result
			row-wise and taking symmetric parts yields a potential
			$q\in H^{1+\alpha}(\Omega;\mathbb R^2)$ satisfying
			\[
			\nabla^\perp_{\mathbb S}q=\eta, \qquad
			\|q\|_{H^{1+\alpha}(\Omega)} \lesssim\|\eta\|_{H^\alpha(\Omega)}.
			\]
			The field $q$ is unique among the choices orthogonal to 
			$\mathit{RT}$.
			Taking $\alpha=0$ proves the claimed decomposition
			and the asserted $H^1$ bound.
			For the proof of uniqueness of the decomposition, it suffices
			to consider the zero decomposition $pI+\nabla^\perp_{\mathbb S}q=0$.
			Taking the double divergence gives $\Delta p=0$,
			and the unique solvability of the Dirichlet-Laplacian shows uniqueness of
			$p$.
			The uniqueness of $q$ follows from orthogonality to $\mathit{RT}$.
			Finally, if $\tau\in H^\alpha(\Omega;\mathbb S)$ with
			$0\le\alpha\le3/2+\varepsilon$, then
			\[
			\|\eta\|_{H^\alpha(\Omega)}
			\lesssim\|\tau\|_{H^\alpha(\Omega)}
			+\|\dDiv\tau\|_{L^2(\Omega)}.
			\]
			The preceding potential construction and uniqueness give
			\eqref{eq:decomposition} for the same $q$.
			
            For the proof of the boundary regularity, let $\tau$ be 
            $H^s$ regular with $s>1/2$.
			It was already shown that
			$q\in H^{1+\beta}(\Omega;\mathbb R^2)$
			for $\beta=\min\{s,1/2+\varepsilon\}$,
			thus $q|_{\partial \Omega}\in H^1(\partial \Omega;\mathbb R^2)$. 
			If $\tau$ is piecewise polynomial,
			$\eta=\nabla^\perp_{\mathbb S}q=\tau-pI$ in the above construction
			satisfies
			$\eta|_K\in H^{3/2+\varepsilon}(K;\mathbb S)$ on every element $K$.
			Taking derivatives in the components of the matrix
			$\eta=\nabla^\perp_{\mathbb S}q$
			gives the (distributional) relations
			for any pair $j\neq k$ from $\{1,2\}$ as follows
            \[
				\partial_{jj}q_j
				=(-1)^j ( 2\partial_j\eta_{jk}+\partial_k\eta_{kk}),
				  \partial_{jk}q_j =(-1)^k\partial_j\eta_{jj},
				\partial_{jj}q_k  =(-1)^j\partial_j\eta_{kk}.
			\]		
			Thus $q|_K\in H^{5/2+\varepsilon}(K;\mathbb R^2)$.
			The embedding $H^{5/2+\varepsilon}(K)\hookrightarrow C^0(\overline K)$
			provides continuous representatives on each closed element.
			Since $q\in H^1(\Omega;\mathbb R^2)$, the traces of adjacent
			elements coincide almost everywhere on their common edge.
			By continuity, they coincide on the whole edge, including
			its endpoints. Propagating this equality through each vertex
			patch shows that the boundary edge traces have matching
			endpoint values, which, together with the edge-wise
			$H^1$ regularity, gives
			$H^1(\partial \Omega;\mathbb R^2)$.
		\end{proof}
	
	\begin{remark}
	\label{remark:svarepsilon}
        Since $\varepsilon =1/2$ on convex domains and
        $s<1$ on non-convex polygons \cite[Chapter~3]{Grisvard1992}, 
        we always have $s\leq 3/2+\varepsilon$
        for $\varepsilon$ from Lemma~\ref{lem:decomposition}.
	\end{remark}
	
For $g=(g_1,g_2)\in X$ and $\tau\in\Sigma^r+\Sigma_h$,
let $\tau=pI+\nabla^\perp_{\mathbb S}q$ be the unique decomposition
in Lemma~\ref{lem:decomposition}. Since $p|_{\partial \Omega}=0$, define
\begin{equation}
	\label{def:divdiv-trace}
	\langle g,\tau\rangle
	:=(\partial_np,g_1)_{L^2(\partial \Omega)}
	-(\partial_tq,t\,\partial_tg_1+n\,g_2)_{L^2(\partial \Omega)}.
\end{equation}
Here $\partial_tq$ denotes the tangential derivative of
$q|_{\partial \Omega}$. The regularity of $p$ and the
last statement of Lemma~\ref{lem:decomposition} together with
the linearity ensure that both terms are
well defined for any $\tau\in\Sigma^r+\Sigma_h$.
Moreover, the decomposition estimates and
the trace theorem give $|\langle g,\tau\rangle| \lesssim\|g\|_X\|\tau\|_{\Sigma^r}$ for all $\tau\in\Sigma^r$,
so $g$ induces a continuous functional on $\Sigma^r$.

\begin{remark}
\label{rem:consistency}
	The boundary functional \eqref{def:divdiv-trace} agrees with
	the trace defined via Green's identity as
$$
 \langle g,\tau\rangle =(\dDiv\tau,G)_{L^2(\Omega)} -(\tau, \nabla^2G)_{L^2(\Omega)}
  ,$$	
	whenever $g$ admits an
	$H^2(\Omega)$ lifting $G$.
	Indeed, assume that $g_1=G|_{\partial \Omega}$ and $g_2=\partial_nG$
    for some $G\in H^2(\Omega)$. Since $\Delta p=\dDiv\tau$
and $p|_{\partial \Omega}=0$, Green's formula gives
$(\Delta p,G)_{L^2(\Omega)} -(p,\Delta G)_{L^2(\Omega)} =(\partial_np,g_1)_{L^2(\partial \Omega)}.$
Since the rows of $\nabla^\perp q$ are divergence-free
with normal trace $\partial_tq$, it holds that $
(\nabla^\perp_{\mathbb S}q,\nabla^2G)_{L^2(\Omega)}
=(\partial_tq,\nabla G)_{L^2(\partial \Omega)}
=(\partial_tq,t\,\partial_tg_1+n\,g_2)_{L^2(\partial \Omega)}$.
This implies that the definition in \eqref{def:divdiv-trace} is consistent 
with the usual $\dDiv$ traces for regular boundary data. 
\end{remark}

\begin{remark}
\label{remark:discrete-consistency}
For $\tau_h\in\Sigma_h$, the boundary functional \eqref{def:divdiv-trace} coincides with the boundary functional used in the discrete formulation of \cite{FuhrerHeuer2025}. Indeed, this follows from Remark~\ref{rem:consistency}, the element-wise Green formula and density and continuity of both boundary functionals with respect to the norm of $X$. This result provides an extension of its boundary action to the test space required by the transposition formulation.
\end{remark}
	
	We next estimate the discrete boundary functional.
	Let $\tau_h=pI+\nabla^\perp_{\mathbb S}q$ be the unique
	decomposition of $\tau_h\in\Sigma_h$
        from Lemma~\ref{lem:decomposition}.
	Since $\tau_h$ is piecewise polynomial and thus
	$H^\alpha$-regular for any $\alpha<1/2$,
	the $q$ component has global
	$H^{1+\alpha}$ regularity.
	The triangle inequality gives
	\begin{equation}
		\label{ds-biharmonic-0}
		| \langle g, \tau_h \rangle| \leq | (\partial_np,g_1)_{L^2(\partial \Omega)}|
		+ | (\partial_tq,\, t\,\partial_tg_1)_{L^2(\partial \Omega)}| +  | (\partial_tq,\, n\,g_2)_{L^2(\partial \Omega)}|.
	\end{equation}
	The first term can be bounded by $|(\partial_np,g_1)_{L^2(\partial\Omega)}|
	\lesssim
	\|\operatorname{divDiv}\tau_h\|_{L^2(\Omega)}
	\|g_1\|_{L^2(\partial\Omega)}$. For the second term with $t_1=1$, it follows from the scaled trace inequality that
	\begin{equation}
		\label{ds-biharmonic:1}
		\begin{aligned}
			|(\partial_tq,t\,\partial_tg_1)_{L^2(\partial \Omega)}|  & \le  
			\|\partial_tq\|_{L^2(\partial \Omega)} \|\partial_tg_1\|_{L^2(\partial \Omega)} \\
			&\lesssim 
			\|g_1\|_{H^{1}(\partial \Omega)} (  h^{-1/2}\|q\|_{H^1(\Omega)}  + h^{1/2}\|\nabla_h^2q\|_{L^2(\Omega)})
			.
		\end{aligned}
	\end{equation}
	The local differential identities in the proof of Lemma~\ref{lem:decomposition}
	and the polynomial inverse inequality imply
	$\| \nabla_h^2q\|_{L^2(\Omega)} \lesssim \|\nabla_h(\tau_h-pI)\|_{L^2(\Omega)} \lesssim  h^{-1}\|\tau_h\|_{L^2(\Omega)} +\|p\|_{H^1(\Omega)}.$ This,  \eqref{ds-biharmonic:1} and \eqref{eq:decomposition} give $|(\partial_tq,t\,\partial_tg_1)_{L^2(\partial \Omega)}|  \lesssim h^{-1/2} \|g_1\|_{H^{1}(\partial \Omega)} \| \tau_h\|_\Sigma$. 
	For the second term of \eqref{ds-biharmonic-0} 
	with $1<t_1 < 3/2$, it follows from the trace theorem that 
	$$
	\begin{aligned}
		& |(\partial_tq,t\,\partial_tg_1)_{L^2(\partial \Omega)}|  \lesssim
		\|\partial_tq\|_{H^{1-t_1}(\partial \Omega)} \|\partial_tg_1\|_{H^{t_1-1}(\partial \Omega)}  \lesssim 	\|q\|_{H^{5/2-t_1}(\Omega)} \|g_1\|_{H^{t_1}(\partial \Omega)} \\
		&\quad  \lesssim (\|\tau_h\|_{H^{3/2-t_1}(\Omega)} + \|\dDiv \tau_h\|_{L^2(\Omega)} ) \|g_1\|_{H^{t_1}(\partial \Omega)} \lesssim
		h^{t_1-3/2}  \|g_1\|_{H^{t_1}(\partial \Omega)} \|\tau_h\|_\Sigma
        ,
	\end{aligned}
	$$
	where the last two estimates follow by \eqref{eq:decomposition} and the
	inverse estimate for $\tau_h$. 
	Similar considerations for the third term of \eqref{ds-biharmonic-0} give
	$$
	\begin{aligned}
		| (\partial_tq,\, n\,g_2)_{L^2(\partial \Omega)}| & \lesssim  h^{-1/2} \|g_2\|_{L^2(\partial \Omega)} \| \tau_h\|_\Sigma  &&\text{ for }  t_2 =0, \\
		| (\partial_tq,\, n\,g_2)_{L^2(\partial \Omega)}| &\lesssim  h^{t_2-1/2} \|g_2\|_{H^{t_2}(\partial \Omega)} \| \tau_h\|_\Sigma  &&\text{ for }  0< t_2 < 1/2.
	\end{aligned}
	$$ 
	Substituting these estimates into \eqref{ds-biharmonic-0} gives the discrete stability 
	$$
	\begin{aligned}
		\|\sigma_h\|_\Sigma+\|u_h\|_{L^2(\Omega)} \lesssim
		\|f\|_{L^2(\Omega)} + h^{t_1-3/2}\|g_1\|_{H^{t_1}(\partial \Omega)} +
		h^{t_2-1/2}\|g_2\|_{H^{t_2}(\partial \Omega)} .
	\end{aligned}
	$$

	Let $P_h:L^2(\Omega)\to U_h$ denote the $L^2$-projection,
	which satisfies
	$$
	\|w-P_h w\|_{L^2(\Omega)} \le C h^2 \|w\|_{U^r} 
	\quad \text{ for all } w \in U^r
	$$
	so that Assumption \ref{condition1} holds with $\rho_U=h^2$.
	Let $\Pi_h^{\mathrm{FH}}$ denote the 
	Führer–Heuer interpolant in \cite[Section 3.3]{FuhrerHeuer2025}. Let $I_h^2$ be the continuous quadratic Lagrange interpolant for vector fields. For $\phi \in\Sigma^r$, let $\phi=pI+\nabla^\perp_{\mathbb S}q$ be its unique decomposition and define 
	\begin{equation}
		\label{def:Pih}
		\Pi_h\phi = \Pi_h^{\mathrm{FH}}(pI) + \nabla^\perp_{\mathbb S}(I_h^2q),\quad \delta_p=pI-\Pi_h^{\mathrm{FH}}(pI), \quad \delta_q=q-I_h^2q.
	\end{equation}
	By \cite[Proposition 10]{FuhrerHeuer2025}, it holds that
	\begin{equation}
		\operatorname{divDiv}\Pi_h^{\mathrm{FH}} (pI)
		= P_h \Delta p, \quad
		\|pI -\Pi_h^{\mathrm{FH}}(pI)\|_{L^2(\Omega)} \lesssim h^{3/2+\varepsilon}\|p\|_{H^{3/2+\varepsilon}(\Omega)}.
	\end{equation}
	The nodal values of $q$ are well defined because $q\in H^{1+s}(\Omega;\mathbb R^2)$, with $s>1/2$.
	Since $P_2(K;\mathbb R^2)\subset RT_2(K)$, $
	\nabla^\perp_{\mathbb S}(I_h^2q)|_K$ belongs to the local shape-function space.
	Moreover, $I_h^2q\in H^1(\Omega;\mathbb R^2)$, so its symmetric curl has vanishing distributional double divergence. Thus $\Pi_h\phi \in\Sigma_h$, and $ \operatorname{divDiv}\Pi_h\phi=
	P_h\Delta p = P_h\operatorname{divDiv}\phi$. Assumption~\ref{condition2}(i) holds. The approximation estimates for the two components give
	$$
	\begin{aligned}
		\|\phi-\Pi_h\phi\|_{L^2(\Omega)} \lesssim h^{3/2+\varepsilon}\|p\|_{H^{3/2+\varepsilon}(\Omega)}+h^s\|q\|_{H^{1+s}(\Omega)} \lesssim h^s\|\phi\|_{\Sigma^r}
         ,
	\end{aligned}
	$$
        where we used Remark~\ref{remark:svarepsilon}.
	Thus $\Pi_h:\Sigma^r\to\Sigma_h$ is a continuous linear commuting approximation operator. This gives Assumption~\ref{condition2}(ii) with $\rho_\Sigma = h^s$.   For $g \in X$, since $p|_{\partial\Omega}=0$, preservation of the normal-normal edge moments and $(\Pi_h^{\mathrm{FH}}(pI))_{nn}|_e\in P_1(e)$ imply
	$(\Pi_h^{\mathrm{FH}}(pI))_{nn}=0$ on every boundary edge.
	Consequently, $\langle(0,g_2),\delta_p\rangle=0$ and
	$$
		\langle g, \phi-\Pi_h\phi \rangle 
		=\langle g, \delta_p \rangle + \langle g, \nabla^\perp_{\mathbb S}\delta_q \rangle 
		=\langle \left(\begin{smallmatrix}g_1\\0\end{smallmatrix}\right),
		                                                   \delta_p \rangle 
		 +\langle \left(\begin{smallmatrix}g_1\\0\end{smallmatrix}\right), 
		                           \nabla^\perp_{\mathbb S}\delta_q \rangle  
        + \langle \left(\begin{smallmatrix}0\\g_2\end{smallmatrix}\right),
                                   \nabla^\perp_{\mathbb S}\delta_q \rangle
		.
	$$    
 By Remark~\ref{remark:discrete-consistency}, the boundary functional defined 
 in \eqref{def:divdiv-trace} agrees on $\Sigma_h$ with the boundary action 
of \cite{FuhrerHeuer2025}. The interpolant $\Pi_h^{\mathrm{FH}}$
 preserves the \(P_1(e)\)-moments of the normal-normal and effective-shear 
traces as well as the corner-force degrees of freedom
 \cite[eqs.~(5)--(6)]{FuhrerHeuer2025}. 
For \(pI\), the normal-normal trace equals \(p\), the 
effective-shear trace equals \(\partial_np\), and all corner-force 
contributions vanish. Since \(p|_{\partial\Omega}=0\), it follows that
	$$
	\langle\left(\begin{smallmatrix}g_1\\0\end{smallmatrix}\right)
              ,\delta_p\rangle
	=
	\sum_{e \in\mathcal E_h^\partial}
	\bigl(
	(1-Q_e^1)\partial_np,\,g_1
	\bigr)_{L^2(e)} \leq \sum_{e\in\mathcal E_h^\partial}
	\|(1-Q_e^1)\partial_np\|_{L^2(e)}
	\|(1-Q_e^1)g_1\|_{L^2(e)}
	.
	$$
	For $e \in \mathcal{E}(K)$, subtracting an affine approximation of $p$, applying the scaled trace inequality, and using the approximation property of $Q_e^1$ yield $ \|(1-Q_e^1)\partial_np\|_{L^2(e)}
	\lesssim h_K^{\varepsilon}|p|_{H^{3/2+\varepsilon}(K)}$. Indeed, the affine approximation has a constant normal derivative, while its restriction to $e$ belongs to $P_1(e)$; both are annihilated by $1-Q_e^1$.
	This, the edge projection estimates for the data and summing over boundary edges gives
	\begin{equation}
		\label{eq:41}
		|\langle(g_1,0),\delta_p\rangle| \lesssim h^{t_1+\varepsilon} \|g_1\|_{H^{t_1}(\partial \Omega)} \|p\|_{H^{3/2+\varepsilon}(\Omega)}
          .
	\end{equation}
	By the global boundary identity,
	$$
	\langle(g_1,0),\nabla^\perp_{\mathbb S}\delta_q\rangle
	=
	-(\partial_t\delta_q,t\,\partial_tg_1)_{L^2(\partial \Omega)}.
	$$
	For $t_1=1$, the Cauchy--Schwarz inequality and the derivative trace estimate for quadratic Lagrange approximation directly give
	$$
	|\langle(g_1,0),\nabla^\perp_{\mathbb S}\delta_q\rangle|
	\lesssim \|  \partial_t\delta_q\|_{L^2(\partial \Omega)} \| t\,\partial_tg_1\|_{L^2(\partial \Omega)}
	\lesssim h^{s-1/2}
	\|g_1\|_{H^1(\partial \Omega)}
	\|q\|_{H^{1+s}(\Omega)}.
	$$
	For $1<t_1<3/2$, note that  $I_h^2q$ preserves the values at both endpoints of every mesh edge, then $\int_e \partial_t\delta_q\,ds=0$ for every $e \in\mathcal E_h^\partial$. This and $t$ is constant on $e$ give $
	(\partial_t\delta_q,t\,\partial_tg_1)_{L^2(e)} = (\partial_t\delta_q,\,
	t(\partial_tg_1-Q_e^0\partial_tg_1) )_{L^2(e)}$. 
	Therefore, the Cauchy--Schwarz inequality and the 
         fractional Poincar\'e estimate on each edge yield
	\begin{equation}
		\label{eq:42}
		|\langle(g_1,0),\nabla^\perp_{\mathbb S}\delta_q\rangle| \lesssim h^{s+t_1-3/2} \|g_1\|_{H^{t_1}(\partial \Omega)} \|q\|_{H^{1+s}(\Omega)}.
	\end{equation}
	The discussion of $t_1=1$ is covered also by \eqref{eq:42}. Following similar procedures as above, we can verify the estimate of the term
	$$
	|\langle(0,g_2),\nabla^\perp_{\mathbb S}\delta_q\rangle|=
	|(\partial_t\delta_q,ng_2)_{L^2(\partial \Omega)}| \lesssim h^{s+t_2-1/2}
	\|g_2\|_{H^{t_2}(\partial \Omega)}
	\|q\|_{H^{1+s}(\Omega)}.
	$$
	Thus,  for every $\phi \in\Sigma^r$ and $g\in X$,
	$$
	\begin{aligned}
		|\langle g,\phi-\Pi_h\phi\rangle|
		\lesssim \Big(
		h^{s+t_1-3/2}\|g_1\|_{H^{t_1}(\partial \Omega)} +
		h^{s+t_2-1/2}\|g_2\|_{H^{t_2}(\partial \Omega)}
		\Big)\|\phi\|_{\Sigma^r}.
	\end{aligned}
	$$
	Assumption~\ref{condition2}(iii) holds with $\rho_X = h^{\min\{s+t_1-3/2, s+t_2-1/2\}}  $. Since $\Sigma_h \subset \Sigma$, $\rho_{\mathrm{nc}}=0$.
	
	\begin{corollary}
		\label{cor:biharmonic}
		Let $\Omega\subset\mathbb R^2$ be a bounded, simply connected
		polygonal Lipschitz domain, and let $1/2<s\le 2$
		be an admissible regularity index for the homogeneous clamped
		biharmonic problem. For $f\in L^2(\Omega)$ and
		$g=(g_1,g_2)\in
		X=H^{t_1}(\partial\Omega)\times H^{t_2}(\partial\Omega)$ with $1\le t_1<3/2$ and $0\le t_2<1/2$.
		Let $u$ be the transposition solution of the biharmonic equation \eqref{eq:biharmonic} and let
		$(\sigma_h,u_h)\in\Sigma_h\times U_h$ solve the mixed method.
		Then it holds that
		\[
		\|u-u_h\|_{L^2(\Omega)}
		\lesssim
		h^s\|f\|_{L^2(\Omega)}
		+h^{s+t_1-3/2}\|g_1\|_{H^{t_1}(\partial\Omega)}
		+h^{s+t_2-1/2}\|g_2\|_{H^{t_2}(\partial\Omega)}.
		\]
	\end{corollary}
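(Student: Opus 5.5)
The proof is an application of Theorem~\ref{thm:error}, with the constants $\rho_U,\rho_\Sigma,\rho_X,\rho_{\mathrm{nc}}$ taken from the preceding discussion. The work consists of checking the hypotheses in order and then collecting the powers of $h$.

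\textbf{Checking the assumptions.} Assumption~\ref{ass:saddle} is the well-posedness of the $H(\dDiv)$ mixed formulation from \cite{ChenHuang2020}, with $H=L^2(\Omega;\mathbb S)$. Assumption~\ref{ass:regularity} is the $H^{2+s}$ regularity of the clamped plate. To see that it gives $\Sigma^r$ regularity, write $\phi=-\nabla^2 w$, so that $\phi\in H^s$ and $\dDiv\phi=-z$. Assumption~\ref{ass:discrete-stability} is the uniform inf-sup stability of the F\"uhrer--Heuer pair. Assumption~\ref{condition1} holds with $\rho_U=h^2$. Assumption~\ref{condition2} holds for the operator $\Pi_h$ from \eqref{def:Pih}:
\begin{itemize}
\item (i) holds since $\dDiv\Pi_h\phi=P_h\dDiv\phi$;
\item (ii) holds with $\rho_\Sigma=h^s$;
\item (iii) holds with $\rho_X$ as computed. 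The cleanest way to record this is $\|g\|_X$ weighted, i.e.\ take the bound $h^{s+t_1-3/2}\|g_1\|+h^{s+t_2-1/2}\|g_2\|$ directly in place of $\rho_X\|g\|_X$, because the proof of the theorem only uses (iii) through that product.
\end{itemize}
Since $\Sigma_h\subset\Sigma$ and $b_h=b$, Remark~\ref{remark:conforming} gives $\rho_{\mathrm{nc}}=0$. One also needs $g\in\Sigma_h'\cap(\Sigma^r)'$. The bound on $\Sigma^r$ follows from the decomposition estimate after \eqref{def:divdiv-trace}, and the bound on $\Sigma_h$ is the discrete stability estimate derived above.

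\textbf{Estimating the terms.} Theorem~\ref{thm:error} gives
\[
\|u-u_h\|\lesssim\inf_{v_h\in U_h}\|u-v_h\|+\rho_X\|g\|_X+h^s\bigl(\|f\|+\|g\|_{\Sigma_h'}\bigr).
\]
Here $s\le2$ means $h^2\lesssim h^s$, so the $\rho_U$ contribution is absorbed. The discrete stability bound gives
\[
\|g\|_{\Sigma_h'}\lesssim h^{t_1-3/2}\|g_1\|_{H^{t_1}(\partial\Omega)}+h^{t_2-1/2}\|g_2\|_{H^{t_2}(\partial\Omega)}.
\]
Multiplying by $h^s$ reproduces exactly the claimed data terms. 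Together with the $\rho_X$ term, this gives everything except the best-approximation term.

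\textbf{Main obstacle: the best-approximation term.} We need $\inf_{v_h\in U_h}\|u-v_h\|$ to be bounded by the same right-hand side, but $u$ is only known to lie in $L^2$. I would avoid this term altogether by using the variant in the first remark after the theorem. That variant requires Assumption~\ref{condition2} for all $z\in U$ rather than only $z_h\in U_h$. This holds here because the constructions of $\Pi_h$ and the estimates (i)--(iii) above used only $\phi\in\Sigma^r$, never that $z$ is discrete; the remark itself notes that only the Maxwell case fails this. With $e_h=u-u_h$ and the dual pair solving $T(\phi,w)=-(0,e_h)$, the same argument yields the estimate without the infimum term, and the corollary follows. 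The delicate point to double-check is the weighted form of (iii): the two components $g_1,g_2$ have different $h$-powers, so the proof should be run with the split bound in place of a single $\rho_X$.
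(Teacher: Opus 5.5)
Your proposal is correct and follows the paper's route: the corollary comes from the verified assumptions through the variant of Theorem~\ref{thm:error} in the first remark after it. Assumption~\ref{condition2} holds for all $z\in U$ here, which removes the best-approximation term; $h^2\lesssim h^s$ absorbs $\rho_U$, and the discrete stability bound turns $h^s\|g\|_{\Sigma_h'}$ into the two data terms. Your component-wise use of (iii) in place of a single $\rho_X$ is a worthwhile precision, because the paper's choice $\rho_X=h^{\min\{s+t_1-3/2,\,s+t_2-1/2\}}$, taken literally, would put the smaller power on both $\|g_1\|$ and $\|g_2\|$ and so give a weaker bound than the one stated.
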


	\begin{example}
		\label{ex:biharmonic}
		We consider the biharmonic problem on the square and L-shaped
                domains from Example~\ref{ex:laplace} and the exact solution
		$$
		u(r,\theta)=r^{1+\alpha}\sin\bigl((1+\alpha)\theta\bigr),\qquad \alpha=-1/3 .
		$$
		It satisfies $f=\Delta^2u=0$.  The boundary terms satisfy
		$$
		g_1\in H^{7/6-\nu}(\partial\Omega),\qquad g_2\in H^{1/6-\nu}(\partial\Omega)\quad  \text{ for all } \nu>0.
		$$ 
		
		On the square domain, the regularity-shift index of the adjoint clamped-plate
		problem $s_{S}=2$, and hence $\|u-u_h\|_{L^2(\Omega)}$ behaves
		like $h^{5/3-\nu}$.  For the standard L-shaped domain, the first clamped-plate singular exponent gives $s_{L}=0.544\ldots$, so that the predicted rate is $s_{L}+\alpha = 0.211\ldots$. Table \ref{tab:biharmonic_fh} shows $L^2$ errors and convergence rates of the lowest-order element, which coincide with the theoretical results 
of Corollary \ref{cor:biharmonic}.
		\qed
		
\begin{table}[tbp]
	\centering\small
	\begin{tabular}{ccccc}
		\toprule
		& \multicolumn{2}{c}{Unit square}
		& \multicolumn{2}{c}{L-shaped domain}\\
		\cmidrule(lr){2-3}\cmidrule(lr){4-5}
		$h_\ell/\sqrt{2}$
		& $\|u-u_h\|_{L^2(\Omega)}$ & order
		& $\|u-u_h\|_{L^2(\Omega)}$ & order\\
		\midrule
		$2^{-1}$ & $5.3871\mathrm{e}{-3}$ & --
		& $1.0866\mathrm{e}{-2}$ & --\\
		$2^{-2}$ & $1.8247\mathrm{e}{-3}$ & $1.5618$
		& $7.2860\mathrm{e}{-3}$ & $0.5766$\\
		$2^{-3}$ & $5.9664\mathrm{e}{-4}$ & $1.6127$
		& $6.2021\mathrm{e}{-3}$ & $0.2324$\\
		$2^{-4}$ & $1.9167\mathrm{e}{-4}$ & $1.6382$
		& $5.4977\mathrm{e}{-3}$ & $0.1739$\\
		$2^{-5}$ & $6.1105\mathrm{e}{-5}$ & $1.6493$
		& $4.8311\mathrm{e}{-3}$ & $0.1865$\\
		$2^{-6}$ & $1.9393\mathrm{e}{-5}$ & $1.6557$
		& $4.2090\mathrm{e}{-3}$ & $0.1989$\\
		$2^{-7}$ & $6.1370\mathrm{e}{-6}$ & $1.6600$
		& $3.6502\mathrm{e}{-3}$ & $0.2055$\\
		\bottomrule
	\end{tabular}
	\caption{$L^2$ errors and observed convergence orders for the
		lowest-order F\"uhrer--Heuer element approximation of the
		two-dimensional biharmonic problem with the boundary data
		$g_1 \in H^{7/6-\nu}$ and
		$g_2 \in H^{1/6-\nu}$.}
	\label{tab:biharmonic_fh}
\end{table}
		
	\end{example}

	\section{Application to Maxwell equation}
	\label{sec:Maxwell}
	
	Let $\Omega\subset\mathbb R^3$ be a bounded simply connected Lipschitz polyhedron with connected boundary $\partial \Omega$.
	We consider the curl-curl problem
	$$
    \curl \curl  u = f,  \quad  \operatorname{div} u =0  \text{ in } \Omega, \quad  n\times u = g  \text{ on } \partial \Omega.
	$$
	We assume the compatibility conditions $\ddiv f =0$ in $\Omega$ and $g\cdot n=0$ on $\partial \Omega$.
	The mixed formulation is based on the auxiliary variable $\sigma = \curl u $
    and the spaces
	$$
	\begin{aligned}
		\Sigma &= H(\curl, \Omega; \mathbb{R}^3) := \{  \tau \in L^2(\Omega; \mathbb{R}^3): \curl \tau  \in L^2(\Omega; \mathbb{R}^3)  \}, \\
		U &= H(\ddiv^0,\Omega;\mathbb{R}^3):=\{ v \in  H(\ddiv,\Omega; \mathbb{R}^3): \ddiv v= 0 \text{ in } \Omega \},
	\end{aligned}
	$$
	equipped with the norms $\| \cdot \|_{\Sigma} = \| \cdot\|_{L^2(\Omega)} + \|\curl \cdot \|_{L^2(\Omega)}$ and $\| \cdot \|_U = \| \cdot \|_{L^2(\Omega)}$, respectively. 
	We consider the bilinear forms 
	$$
	a(\sigma, \tau) = ( \sigma, \tau)_{L^2(\Omega)}, \quad 
	b(\tau, v) = -(\curl \tau, v )_{L^2(\Omega)}  \text{ for all } \sigma,\tau \in \Sigma, v \in U.
	$$
	As before, the form $a$ extends to $H=L^2(\Omega;\mathbb R^3)$ equipped with the
    $L^2$ norm.
	Assumption \ref{ass:saddle} follows from the mixed formulation
        of the curl-curl problem \cite{Monk2003}.
    Let $s \in (1/2,1]$ be an admissible elliptic shift exponent so that
    the solution to the homogeneous Dirichlet problem of the curl-curl
    equation is $H^s(\Omega)$ regular 	\cite{CostabelDauge2000}.
    By $H_0(\curl, \Omega; \mathbb{R}^3)$ we denote the closure of
    smooth and compactly supported vector fields with respect to the 
    norm $\|\cdot\|_\Sigma$.
    Let $U^r = H^s(\Omega;\mathbb{R}^3) \cap H_0(\curl,\Omega;\mathbb R^3) \cap U$ with the norm $\| \cdot \|_{U^r} = \| \cdot\|_{H^s(\Omega)} + \| \curl  \cdot \|_{L^2(\Omega)}$ and 
	$$
	\Sigma^r = \Sigma \cap H^{s}(\Omega; \mathbb{R}^3) \text{ with the norm } \|\cdot \|_{\Sigma^r} = \|\cdot \|_{H^{s}(\Omega)} + \|\operatorname{curl}\cdot \|_{L^2(\Omega)}.
	$$  
	By \cite{CostabelDauge2000},
    for any $z\in U$, the solution to $T(\phi,w)=(0,z)$ is unique and 
	satisfies
	$$
	\| w \|_{H^s(\Omega)} + \|  \phi \|_{H^s(\Omega)} \lesssim \| z\|_{L^2(\Omega)}.
	$$
	This verifies Assumption \ref{ass:regularity}.
	
	Let $\mathcal T_h$ be a tetrahedral triangulation of $\Omega$
    as in prior sections.
    Given $K \in\mathcal T_h$, we define the usual N\'ed\'elec space
	as $N_0(K):= \{\tau:K\to\mathbb R^3 : \tau(x) = \alpha+\beta\times x
                   \text{ for }\alpha,\beta\in\mathbb R^3\}$.
	We consider the conforming lowest-order discrete de Rham pair in \cite{Boffi2017}
	$$
	\Sigma_h = \{\tau\in\Sigma: 
                    \tau|_K\in N_0(K)\, \text{ for all } K\in\mathcal T_h \}
                    , \qquad U_h=\curl\Sigma_h.
	$$
	By exactness of the discrete de Rham complex, 
    $U_h$ equals the divergence-free lowest-order Raviart--Thomas
    space.
	The corresponding discrete mixed problem is uniformly stable
    \cite{Monk2003}, which gives Assumption \ref{ass:discrete-stability}.
    For any $\tau \in \Sigma$, let $\gamma_t \tau:=(n\times(\tau\times n))|_{\partial \Omega}$ denote its tangential component. For $0\le t<1/2$, define $X =\{ g \in H^t(\partial \Omega; \mathbb{R}^3): g \cdot n = 0 \text{ on } \partial \Omega\}$ with $\|g\|_X := \|g\|_{H^t(\partial \Omega)}$.

\begin{remark}
We consider the Maxwell problem with $H^t$ boundary data,
but unlike in the other PDE examples of this paper, this does not
describe a superspace of the trace space of $\Sigma$,
which is described in \cite{BuffaCostabelSheen2002}.
Still, boundary data in $H^t$ can admit functions that need not
belong to the the standard Maxwell trace space.
\end{remark}

		For $\tau_h\in\Sigma_h$, define 
        $\langle g,\tau_h\rangle =  (g,\gamma_t\tau_h)_{L^2(\partial \Omega)}$.
      For $t=0$, the trace and inverse inequalities show
		\begin{equation}
			\label{eq: max-t=0}
			|\langle g, \tau_h \rangle|
           \leq \| g\|_{X} \| \tau_h\|_{L^2(\partial \Omega)}
            \lesssim h^{-1/2}  \| g\|_{X} \| \tau_h\|_{\Sigma}.
		\end{equation}
		For $0<t<1/2$, it holds that $g\times n\in H^t(\partial\Omega;\mathbb R^3)$ and $	\|g\times n\|_{H^t(\partial\Omega)}
		\lesssim\|g\|_X$,
		see \cite[Corollary 1.4.4.5]{Grisvard1985}. 
        By \cite[Theorem 1.5.1.2]{Grisvard1985},
        there exists a lifting $G \in H^{1/2+t}(\Omega;\mathbb R^3)$ such that 
		$G|_{\partial\Omega}=g\times n$ and $\|G\|_{H^{1/2+t}(\Omega)}
		\lesssim\|g\|_X$.
		Since $g\cdot n=0$, we have $n\times G=n\times(g\times n)=g$ 
        and the generalized Green formula
		\begin{equation}
			\label{eq: curl-integration}
			\langle g,\tau_h\rangle =\langle\curl G,\tau_h\rangle_\Omega
			-(G,\curl\tau_h)_{L^2(\Omega)}.
		\end{equation}
		The first term pairs $H^{t-1/2}(\Omega)$ with $H_0^{1/2-t}(\Omega)=H^{1/2-t}(\Omega)$. 
		This and the inverse estimate give
		$|\langle\curl G,\tau_h\rangle_\Omega| \leq \| \curl G\|_{H^{t-1/2}(\Omega)} \| \tau_h\|_{H^{1/2-t}(\Omega)}  \lesssim h^{t-1/2} \| g\|_{X} \| \tau_h\|_{\Sigma}$.
		The second term $|(G,\curl\tau_h)_{L^2(\Omega)}| \leq \| G\|_{L^2( \Omega)} \| \curl \tau_h \|_{L^2(\Omega)} \lesssim \| g\|_X \| \tau_h\|_{\Sigma}$. Substituting these two estimates into \eqref{eq: curl-integration} 
        and combining with \eqref{eq: max-t=0} results in
		\begin{equation}
		\label{eq:max-discrete-stability}
		\|\sigma_h\|_\Sigma+\|u_h\|_U\lesssim\|f\|_{L^2}+h^{t-1/2}\|g\|_X.
		\end{equation}
	
	Let $P_h: U^r \to U_h$ denote the lowest-order Raviart--Thomas interpolant 
	\cite{BoffiBrezziFortin2013}.
    Since $\operatorname{div} w=0$ for $w \in U^r$, 
   the commuting property implies
    $\operatorname{div} P_hw=0$,
	and therefore $P_hw\in U_h$. Hence, 
	$ \|w-P_hw\|_{L^2(\Omega)} \lesssim h^s\|w\|_{H^s(\Omega)}$
    and
    Assumption~\ref{condition1} holds with $\rho_U = h^s$.
Let $\Pi_h: \Sigma^r \to \Sigma_h$ denote the canonical N\'ed\'elec interpolant
   \cite{BoffiBrezziFortin2013}. 
    It does not, in general, satisfy the Fortin property
	$b(\Pi_h\phi,v_h)=b(\phi,v_h)$ for all $\phi\in\Sigma^r$ and 
    $v_h\in U_h$, since the Raviart--Thomas interpolant is not the 
    $L^2$-orthogonal projection onto $U_h$.  However, if 
   $T(\phi, w) = (0, z_h)$ for some $z_h \in U_h$, then $\curl \phi \in U_h$ 
    and $P_h\curl\phi =\curl\phi$. This  implies
	$$
	b(\Pi_h\phi,v_h)=  -(\curl \Pi_h\phi, v_h)_{L^2(\Omega)}
	 =	-(P_h\curl\phi, v_h)_{L^2(\Omega)} 
	 = b(\phi,v_h)
	$$
	 for all $v_h \in U_h$.
	This proves Assumption~\ref{condition2}(i). 
	By \cite{Monk2003,BoffiBrezziFortin2013},
		\begin{equation}
		\label{eq:interpolation-nedelec}
		\|\phi-\Pi_h\phi\|_{L^2(\Omega)} \lesssim
			h^s\|\phi\|_{H^s(\Omega)}	+	h\|\curl\phi\|_{L^2(\Omega)}
                  .
		\end{equation}
		Using the regularity of the dual solution, we obtain
		$$
		\|\phi-\Pi_h\phi\|_{L^2(\Omega)}
		\lesssim h^s\|z_h\|_{L^2(\Omega)}.
		$$ 
		Assumption~\ref{condition2}(ii) holds with $\rho_\Sigma = h^s$. 
		Let $\delta_h :=\phi-\Pi_h\phi$. 
        Since $\curl \phi \in U_h$, it satisfies $\curl \delta_h =0$.
        We next estimate the boundary term $|\langle g,\delta_h\rangle|$.
        Since $\delta_h$ is not a discrete function, we introduce
        a further approximation step.
		Let $q_h$ be a continuous piecewise polynomial quasi-interpolant 
        of $\phi$ satisfying the standard approximation properties. 
        For $t=0$, the triangle inequality, the boundary approximation estimate,
        and the inverse trace inequality applied to $q_h-\Pi_h\phi$ give
		$$
		\begin{aligned}
		\| \delta_h\|_{L^2(\partial \Omega)}
		& \lesssim \| \phi - q_h \|_{L^2(\partial \Omega)}  + h^{-1/2}  \| q_h - \Pi_h \phi \|_{L^2(\Omega)} \\
		& \lesssim h^{s-1/2} \| \phi\|_{H^s(\Omega)}  + h^{-1/2}  \| \delta_h \|_{L^2(\Omega)}.
		\end{aligned}
		$$
		Together with \eqref{eq:interpolation-nedelec}
		and the inverse estimate for the discrete function $\curl\phi$,
        this yields
		\begin{equation}
			\label{eq1:max-t=0}
				|\langle g, \delta_h \rangle|  \leq \| g\|_X \| \delta_h\|_{L^2(\partial \Omega)} \lesssim h^{s-1/2}  \| g\|_X \| \phi\|_{H^s(\Omega)}.
		\end{equation}
		Let now $0<t<1/2$, and let $G$ be the lifting introduced above. 
        By \eqref{eq: curl-integration} and $\curl \delta_h =0$,
         the generalized Green formula simplifies to 
         $\langle g,\delta_h\rangle	= \langle\curl G,\delta_h\rangle_\Omega$.
       Furthermore,
        $$
        \begin{aligned}
        	\| \delta_h\|_{H^{1/2-t}(\Omega)}
        	& \lesssim \| \phi - q_h \|_{H^{1/2-t}(\Omega)}  + h^{t-1/2}  \| q_h - \Pi_h \phi \|_{L^2(\Omega)} \\
        	& \lesssim h^{s+t-1/2} \| \phi\|_{H^s(\Omega)}  + h^{t-1/2}  \| \delta_h \|_{L^2(\Omega)}.
        \end{aligned}
        $$
        This, combined with \eqref{eq:interpolation-nedelec}, gives the estimate
        $$
        |\langle g, \delta_h \rangle| 
         = |\langle \curl G, \delta_h \rangle_\Omega| 
         \leq \| \curl G\|_{H^{t-\frac12}(\Omega)} \| \delta_h\|_{H^{\frac12-t}( \Omega)}
        \lesssim h^{s+t-\frac12} \| g\|_X \|\phi\|_{H^s(\Omega)}.
        $$ 
        This and \eqref{eq1:max-t=0} verify Assumption~\ref{condition2}(iii) with $\rho_X= h^{s+t-1/2}$.
        Since $\Sigma_h \subset \Sigma$, Remark \ref{remark:conforming} applies and $\rho_{\rm nc} = 0$. 
	\begin{corollary}
		\label{cor:maxwell}
		Let $\Omega \subseteq \mathbb{R}^3$ be an open, bounded 
                simply-connected Lipschitz polyhedron with connected boundary
                and let $1/2<s\leq 1$ be an admissible regularity index for 
                the homogenuous Dirichlet curl-curl problem.
                Given $f \in L^2(\Omega; \mathbb{R}^3)$ and
                $g \in H^t(\partial \Omega;\mathbb{R}^3)$ with $0\leq t <1/2$ 
                satisfying the compatibility conditions $\ddiv f =0$ in $\Omega$ 
                and $g\cdot n=0$ on $\partial \Omega$, 
		let $u$ be the transposition solution of the curl-curl problem and let
		$(\sigma_h,u_h)$ solve the discrete mixed problem. Then it holds that
		\begin{equation*}
			\|u-u_h\|_{L^2(\Omega)} \lesssim \inf_{v_h\in U_h} \|u-v_h\|_{L^2(\Omega)} + h^s\|f\|_{L^2(\Omega)} + h^{s+t-1/2}\|g\|_X.
		\end{equation*}
	\end{corollary}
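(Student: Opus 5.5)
The corollary follows by applying Theorem~\ref{thm:error} to the Maxwell setting. All ingredients were assembled in the discussion preceding the statement, so the plan is mainly to check each hypothesis and then collect the rates. No further structural argument is needed.

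\textbf{Hypotheses.}
\begin{itemize}
\item Assumption~\ref{ass:saddle} holds by the standard mixed theory for the curl-curl problem \cite{Monk2003}, with $H=L^2(\Omega;\mathbb R^3)$.
\item Assumption~\ref{ass:regularity} holds by the Costabel--Dauge regularity \cite{CostabelDauge2000}. This gives $\|\phi\|_{H^s}+\|w\|_{H^s}\lesssim\|z\|_{L^2}$. The curl terms in the $\Sigma^r$ and $U^r$ norms must also be controlled. For this I use $\curl\phi=-z$ from the second equation, since $z\in U$ is divergence free and $b$ tests against all of $U$. I use $\curl w=\phi$ from the first equation tested with $\Sigma$.
\item Assumption~\ref{ass:discrete-stability} is the uniform stability of the N\'ed\'elec / divergence-free Raviart--Thomas pair.
\item Assumption~\ref{condition1} holds with $\rho_U=h^s$, using the Raviart--Thomas interpolant $P_h$, which maps $U^r$ into $U_h$ by the commuting property.
\end{itemize}

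\textbf{Assumption~\ref{condition2}.} This is only required for data $z_h\in U_h$. That restriction is exactly why the theorem was stated with the discrete $z_h$, and why the best-approximation term $\inf_{v_h}\|u-v_h\|$ survives in the corollary.
\begin{itemize}
\item For $z_h\in U_h$ we have $\curl\phi\in U_h$. Hence $P_h\curl\phi=\curl\phi$, and the commuting diagram yields the Fortin identity~(i).
\item Estimate \eqref{eq:interpolation-nedelec}, together with $\|\curl\phi\|=\|z_h\|$, gives (ii) with $\rho_\Sigma=h^s$.
\item For (iii) set $\delta_h=\phi-\Pi_h\phi$, which is curl free.
  \begin{itemize}
  \item For $t=0$: pass through a continuous quasi-interpolant $q_h$, then use trace and inverse trace inequalities to get $h^{s-1/2}$.
  \item For $0<t<1/2$: use the lifting $G$ of $g\times n$. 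The Green formula \eqref{eq: curl-integration} reduces $\langle g,\delta_h\rangle$ to $\langle\curl G,\delta_h\rangle_\Omega$. Then bound $\|\delta_h\|_{H^{1/2-t}}$ via $q_h$ and an inverse estimate, giving $\rho_X=h^{s+t-1/2}$.
  \end{itemize}
\end{itemize}

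\textbf{Nonconformity.} Since $\Sigma_h\subset\Sigma$, Remark~\ref{remark:conforming} gives $\rho_{\rm nc}=0$.

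\textbf{Assembling.} Insert $\|g\|_{\Sigma_h'}\lesssim h^{t-1/2}\|g\|_X$ from \eqref{eq:max-discrete-stability} into the theorem. The products $(\rho_\Sigma+\rho_U)\|g\|_{\Sigma_h'}\lesssim h^{s+t-1/2}\|g\|_X$ then match $\rho_X\|g\|_X$. The $f$ term contributes $h^s\|f\|$.

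\textbf{Main obstacle.} The delicate step is (iii), because $\delta_h$ is neither discrete nor smooth at the boundary. The estimate hinges on the quasi-interpolant splitting, and on the $H^{1/2-t}$ inverse estimate being applicable to $q_h-\Pi_h\phi$. A second check is needed: the transposition definition pairs $g$ only with $\phi\in\Sigma^r$, so I must confirm that $|\langle g,\phi\rangle|\lesssim\|g\|_X\|\phi\|_{\Sigma^r}$ for $g\in X$. This should follow from the same Green formula with $\|\curl\phi\|_{L^2}$ and the $H^s$ trace of $\phi$, using $s>1/2$.
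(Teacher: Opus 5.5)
Your proposal is correct and follows the paper's own verification essentially step by step. It uses the same Raviart--Thomas $P_h$ and N\'ed\'elec $\Pi_h$, with the Fortin identity holding only for $z_h\in U_h$; this is why the best-approximation term survives. It also uses the same quasi-interpolant splitting and lifting of $g\times n$ for Assumption~\ref{condition2}(iii), and the same assembly through \eqref{eq:max-discrete-stability} and Theorem~\ref{thm:error}. Your extra checks, namely the curl parts of the $\Sigma^r$ and $U^r$ norms and $g\in(\Sigma^r)'$, are details the paper leaves implicit. Note only that with $b(\tau,v)=-(\curl\tau,v)_{L^2(\Omega)}$ the second equation gives $\curl\phi=z$ rather than $-z$, which is harmless for the norm bounds.
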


	\begin{remark}
	\label{remark:Maxwell-inf}
		If, in addition, $u\in H^r(\Omega;\mathbb R^3)$ with $r \geq 0$,
                then it can be shown \cite{FalkWinther2014} 
               that
                the best-approximation term on the right-hand
                side of the error estimate in Corollary~\ref{cor:maxwell}
               is controlled by
		$
		h^{\min\{r,1\}} \|u\|_{H^r(\Omega)}.
		$
		Hence
		$$
		\|u-u_h\|_{L^2(\Omega)} \lesssim h^{\min\{r,1\}}\|u\|_{H^r(\Omega)} + h^s\|f\|_{L^2(\Omega)} + h^{s+t-1/2}\|g\|_X.
		$$
	\end{remark}
	
	\begin{example}
		We consider the three-dimensional Maxwell problem on the 
               unit cube $\Omega_C$ and the L-shaped prism $\Omega_P$ from
               Example~\ref{ex:laplace}.
               The exact solution is ${u}(x,y,z) = \bigl(0, 0, r^\alpha \sin(\alpha\theta)\bigr)^T$ with $\alpha = -1/3$ and vanishing source term $ {f} = 0$. 
        The tangential boundary datum $ {g} =  n \times u \in H^{1/6-\nu}(\partial\Omega; \mathbb{R}^3)$ is imposed weakly through the natural duality pairing in the transposition formulation.
		
		Table~\ref{tab:maxwell} reports the $L^2$ errors and the observed convergence orders for the lowest-order N\'ed\'elec edge element approximation. 
          Since $u \in H^{3/2-\nu}(\Omega; \mathbb{R}^3)$, 
          Remark~\ref{remark:Maxwell-inf} shows the first term of the 
          error estimate gives $h^{3/2-\nu}\| u \|_{H^{3/2-\nu}(\Omega)}$.
        On the unit cube, the observed convergence rate is close to the theoretical prediction $s + t - 1/2 = 2/3 - \nu$ (with shift index $s=1$ 
         and boundary exponent $t = 1/6 -\nu$). For the L-shaped prism, 
   the edge singularity limits the regularity index to $s_P = 2/3 -\nu$,
      yielding an expected convergence order of $s_P + t - 1/2 = 1/3-\nu$ 
      for any $\nu$.
		The observed convergence rates are close to this prediction.
		\qed
		
\begin{table}[tbp]
	\centering\small
	\begin{tabular}{ccccc}
		\toprule
		& \multicolumn{2}{c}{Unit cube}
		& \multicolumn{2}{c}{L-shaped prism}\\
		\cmidrule(lr){2-3}\cmidrule(lr){4-5}
		$h_\ell/\sqrt{3}$
		& $\|u-u_h\|_{L^2(\Omega)}$ & order
		& $\|u-u_h\|_{L^2(\Omega)}$ & order\\
		\midrule
		$2^{-1}$ & $8.6655\mathrm{e}{-2}$ & --
		& $2.3272\mathrm{e}{-1}$ & --\\
		$2^{-2}$ & $6.3194\mathrm{e}{-2}$ & $0.4555$
		& $1.7818\mathrm{e}{-1}$ & $0.3853$\\
		$2^{-3}$ & $4.3248\mathrm{e}{-2}$ & $0.5472$
		& $1.3295\mathrm{e}{-1}$ & $0.4224$\\
		$2^{-4}$ & $2.8424\mathrm{e}{-2}$ & $0.6055$
		& $9.8980\mathrm{e}{-2}$ & $0.4257$\\
		$2^{-5}$ & $1.8306\mathrm{e}{-2}$ & $0.6348$
		& $7.4432\mathrm{e}{-2}$ & $0.4112$\\
		$2^{-6}$ & $1.1670\mathrm{e}{-2}$ & $0.6495$
		& $5.6699\mathrm{e}{-2}$ & $0.3926$\\
		\bottomrule
	\end{tabular}
	\caption{$L^2$ errors and observed convergence orders for the
		lowest-order N\'ed\'elec approximation of the three-dimensional
		Maxwell problem with boundary data
		$g \in H^{1/6-\nu}(\partial\Omega)$.}
	\label{tab:maxwell}
\end{table}

	\end{example}

	%-----------------------------------------------------------------------
	\bibliographystyle{abbrv}
	\bibliography{references-transposition}
	%-----------------------------------------------------------------------

\end{document}